\newtheorem{thm}{Theorem}[section]
\newtheorem{prop}[thm]{Proposition}
\newtheorem{lemma}[thm]{Lemma}
\theoremstyle{definition}
\tikzstyle{point}=[shape=circle,fill,inner sep=0.5mm]
\newcommand{\enne}{{\mathbb{N}}}
\newcommand{\ze}{{\mathbb{Z}}}
\newcommand{\qu}{{\mathbb{Q}}}
\newcommand{\erre}{{\mathbb{R}}}
\newcommand{\pro}{{\mathbb{P}}}
\newcommand{\cappa}{{\mathbb{K}}}
\newcommand{\effe}{{\mathbb{F}}}
\newcommand{\trop}{{\mathbb{T}}}
\newcommand{\mcors}{{\mathfrak{m}}}
\newcommand{\ocors}{{\mathcal{O}}}
\newcommand{\freccia}{{\longrightarrow}}
\newcommand{\hilb}[2]{\ensuremath{\operatorname{Hilb}_{#2}(#1)}}
\newcommand{\p}[1]{\ensuremath{{\mathbb P}^{#1}}}
\newcommand{\G}[2]{\ensuremath{{\mathbb G}(#1,#2)}}
\DeclareMathOperator{\adeg}{arith-deg}
\DeclareMathOperator{\ini}{in}
\DeclareMathOperator{\linspan}{Span}
\DeclareMathOperator{\mult}{mult}
\DeclareMathOperator{\reg}{reg}
\DeclareMathOperator{\sat}{sat}
\DeclareMathOperator{\cp}{cp}
\DeclareMathOperator{\tropicalization}{Trop}
\newcommand{\nuovo}[1]{{{\bfseries \upshape #1}}}
\newenvironment{thm*}[1]{\vspace{0.2cm}{\bf Theorem #1:} \itshape }{\vspace{0.2cm}}
\begin{document}

\title{On the Tropicalization of the Hilbert~Scheme}

\author[D.~Alessandrini]{Daniele Alessandrini}
\address{Daniele Alessandrini\\Institut de Recherche Math\'ematique Avanc\'ee\\CNRS et UDS\\7 rue Ren\'e Descartes\\67084 Strasbourg Cedex\\France}
\email{daniele.alessandrini@gmail.com}

\author[M.~Nesci]{Michele Nesci}
\address{Michele Nesci\\Universit\'e de G\`eneve, Section de Math\'ematiques\\2-4 rue du Lièvre, Case postale 64\\ 1211 Genève 4\\ Suisse}
\email{Michele.Nesci@unige.ch}

\date{\today}

\begin{abstract}
In this article we study the tropicalization of the Hilbert scheme and its suitability as a parameter space for tropical varieties. We prove that the points of the tropicalization of the Hilbert scheme have a tropical variety naturally associated to them. To prove this, we find a bound on the degree of the elements of a tropical basis of an ideal in terms of its Hilbert polynomial.

As corollary, we prove that the set of tropical varieties defined over an algebraically closed valued field only depends on the characteristic pair of the field and the image group of the valuation.

In conclusion, we examine some simple examples that suggest that the
definition of tropical variety should include more structure than what is currently considered.
\end{abstract}

\maketitle

\section{Introduction}
In \cite{SS} Speyer and Sturmfels studied the tropicalization of the Grassmannian and found that it is a parameter space for tropical linear subspaces, just like the ordinary Grassmannian in algebraic geometry. This result inspired our study of the tropicalization of the Hilbert scheme for two reasons: first, Grassmannians are an example of Hilbert schemes, and second, the standard construction of the Hilbert scheme realizes it as a subscheme of a Grassmannian.

The Hilbert scheme is a parameter space for embedded projective varieties. For this reason, it is natural to ask wether its tropicalization can be considered as a parameter space for tropical varieties. Our main result is the following:

\begin{thm*}{\ref{teo:teorema_principale}}
Let $\cappa$ be an algebraically closed field with surjective real valued valuation. There is a commutative diagram
\[
\begin{tikzpicture}[node distance=6cm]
 \node (G) {\hilb{p}{n}};
 \node[right of=G] (pl) {$\left\{ V \subset \cappa\pro^n \ |\ V \mbox{ has Hilbert polynomial } p \right\}$};
  \draw[<->] (G)-- node[auto] {$b$} (pl);
 \node[below=1cm of G] (TG) {$\tropicalization(\hilb{p}{n})$};
 \node[right of=TG] (Tpl) {$\left\{ \tropicalization(V) \subset \trop\pro^n \ |\ V \mbox{ has Hilbert polynomial } p \right\}$};
  \draw[->]  (G)   -- node[auto]{$\tau$} (TG);
  \draw[->>] (TG)  -- node[auto]{$s$}                   (Tpl);
  \draw[->]  (pl)  -- node[auto]{$\tropicalization$} (Tpl);
\end{tikzpicture}
\]
Where $b$ is the classical correspondence between points of the Hilbert scheme and subschemes of $\cappa\pro^n$. The map $s$ is surjective.
\end{thm*}

In order to prove this theorem, we need to prove a result about tropical bases. We recall that a tropical basis for an ideal $I$ is a finite set of polynomials generating the ideal and such that the intersection of the hypersurfaces defined by these polynomials is equal to the tropicalization of the variety defined by $I$.

Every Hilbert polynomial $p$ can be expressed in a canonical way in terms of natural numbers $m_0, m_1, \dots m_s$ (see subsection \ref{subsez:Hilb poly}). Of these numbers, the number $m_0$ is particularly important in the construction of the embedding of the Hilbert scheme. This number also gives a bound on the degree of a tropical basis:

\begin{thm*}{\ref{grado_m0}} Let $I$ be a saturated homogeneous ideal, with Hilbert polynomial~$p$. Then there exist a tropical basis consisting of polynomials of degree at most $m_0$. In particular this bound only depends on $p$.
\end{thm*}

As a corollary of theorem \ref{teo:teorema_principale}, we are able to say that tropical varieties have little dependence on the valued field one chooses for their construction. In particular:

\begin{thm*}{\ref{teo:indipendenza}} The set of tropical varieties definable over an algebraically closed valued field only depends on the characteristic pair of the valued field and on the image of the valuation. If one considers valuations that are surjective on $\erre$, the set of tropical varieties only depends on the characteristic pair.
\end{thm*}

The application that associates a tropical variety to a point of the tropicalization of the Hilbert scheme is surjective (on the set of all tropical varieties that are tropicalizations of algebraic varieties with a fixed Hilbert polynomial) but is not in general injective. To understand why this happens we studied two kinds of examples. 
The first is about the Hilbert schemes of hypersurfaces. Here one problem is that there are several different tropical polynomials that define the same function, and thus the same hypersurface, whenever one of the terms is never the prevalent one. Anyway, it is possible to adjust things so that the map becomes injective. It is necessary to add some extra structure to the tropical hypersurfaces, namely to add weights to the maximal faces, as usual. Once this extra structure is considered, there exists a unique subpolyhedron $P \subset \tropicalization\left(\hilb{p}{n}\right)$ such that the restriction of the correspondence to $P$ is bijective.

We think that this property of the existence of a subpolyhedron that is a ``good'' parameter space should probably be true also for the general Hilbert scheme, but in general it is not clear what is the suitable extra structure. The second kind of examples is about the Hilbert schemes of the pairs of points in the tropical plane. In this case adding the weights to the tropical variety structure is not enough and this is because the tropicalization of the Hilbert scheme seems to remember more information about the nonreduced structure than is expressable by a single integer. This suggests that it would be necessary to enrich the structure of tropical variety even more.

For the convenience of the reader we include here a brief overview of each section.
Section \ref{sez:prelim} opens with some essential facts on the Hilbert polynomial. It then proceeds describing the natural embedding of the Hilbert scheme in the projective space, through the Grassmannian. This is the embedding that will be used for the tropicalization.
Section 3 contains some general facts on valued fields, which are necessary to prove theorem \ref{teo:indipendenza}, along with some fundamental facts about tropical varieties. At the end of the section we state and prove theorem \ref{grado_m0}, although the proof of a technical lemma is postponed to section \ref{sez:bound_monomials}.
The main theorem, \ref{teo:teorema_principale}, and its corollary on the dependence on the base field are stated and proved in section 4.
Section \ref{sez:bound_monomials} itself is completely devoted to prove the technical lemma used in the proof of theorem \ref{grado_m0}. To do so we use the arithmetic degree and primary decomposition to investigate the degree of monomials contained in an ideal.
In the last section we treat our two examples: hypersurfaces and pairs of points in the projective plane.

\section{Preliminaries}\label{sez:prelim}
\subsection{The Hilbert Polynomial}    \label{subsez:Hilb poly}
Let $k$ be a field and $S = k[x_0, \dots, x_n]$, the free graded $k$-algebra of polynomials in $n+1$ variables. We denote by $S_d \subset S$ the vector subspace of homogeneous polynomials of degree $d$. If $I$ is an homogeneous ideal, we denote its homogeneous parts by $I_d = I \cap S_d$. The quotient algebra has a natural grading $S/I = \bigoplus_d {(S/I)}_d = \bigoplus_d S_d / I_d$. The \nuovo{Hilbert function} of $I$ is the function $h_I:\enne \freccia \enne$ defined by:
$$h_I(d) = \dim_k( S_d / I_d ) = \dim_k S_d - \dim_k I_d$$
It is not difficult to see (\cite[Thm. 1.11]{Eis}) that for every homogeneous ideal $I$ there exists a number $d_0$ and a polynomial $p_I$ of degree $s \leq n$, with rational coefficients, such that for $d \geq d_0$ we have
$$h_I(d) = p_I(d)$$  
The polynomial $p_I$ is the \nuovo{Hilbert polynomial} of $I$. Note that $p_I = p_{I^{\sat}}$, hence the Hilbert polynomial only depends on the subscheme of $\pro^n$ defined by $I$. Many invariants of this subscheme may be read from $p_I$, for example $\dim(I) = s = \deg(p_I)$, and $\deg(I) = s! a_s$, where $a_s x^s$ is the term of higher degree of $p_I$.

A \nuovo{numerical polynomial} is a polynomial with rational coefficients taking integer values for all large enough integer arguments. This includes the Hilbert polynomials of homogeneous ideals. Given a finite sequence of numbers $m_0, \dots, m_s \in \ze$, with $m_s \neq 0$, the following formula defines an integer polynomial in $x$ of degree $s$ with term of higher degree $\frac{m_s}{s!} x^s$:
$$g(m_0, \dots, m_s; x) = \sum_{i=0}^s \binom{x+i}{i+1} - \binom{x+i-m_i}{i+1}$$
Moreover every numerical polynomial of degree $s$ can be expressed in the form $g(m_0, \dots, m_s; x)$ (see \cite[Lemma 1.3]{Ba}).

There is a simple description of which numerical polynomials are the Hilbert polynomial of some homogeneous ideal: Let $m_0, \dots, m_s \in \ze$ with $m_s \neq 0$. Then there exists a non-zero homogeneous ideal $I \subset S$ with $p_I = g(m_0, \dots, m_s; x)$ if and only if $s < n$ and $m_0 \geq \dots \geq m_s > 0$ (see \cite[Cor. 5.7]{Har66}).

Some examples. If $I = (0)$, then $p_I(x) = g(m_0,\dots,m_n; x)$ with $m_0 = \dots = m_n = 1$ (see \cite[Chap. 2, 1.15]{Ba}). If $I = (f)$ is a principal ideal with $\deg(f) = d$, then $p_I(x) = g(m_0,\dots,m_{n-1}; x)$ with $m_0 = \dots = m_{n-1} = d$ (see \cite[p. 30]{Ba}). At the other extreme, if $\dim(I) = 0$, with $\deg(I) = d$, then $p_I(x) = g(d; x)=d$. If $I$ defines a curve of degree $d$ and arithmetic genus $g$, then $p_I(x) = g(\binom{d}{2}+1-g, d; x)$ (see \cite[Chap. 2, 1.17]{Ba}). The other easy case is when $I$ is generated by linear forms, and $\dim(I) = s$. Then $p_I(x) = g(m_0,\dots,m_s; x)$ with $m_0 = \dots = m_s = 1$.

The \nuovo{Castelnuovo-Mumford regularity} of a saturated homogeneous ideal $I$, denoted by $\reg(I)$, is the smallest integer $m$ such that $I$ is generated in degree not greater than $m$ and, for every $i$, the $i$-th syzygy module of $I$ is generated in degree not greater than $m+i$ (see \cite[sec. 20.5 and ex. 20.20]{Eis}, \cite[Chap. 2, 2.1 and Lemma 2.4]{Ba}). For every $d \geq \reg(I) - 1$ we have $h_I(d) = p_I(d)$ (see \cite[Chap. 2, 2.5]{Ba}). 

For every saturated ideal $I$ with Hilbert polynomial $g(m_0,\dots,m_s;x)$ we have $\reg(I) \leq m_0$ (see \cite[Chap. 2, Prop. 9.4 and Prop. 10.1]{Ba}). This fact is fundamental in the construction of the projective embedding of the Hilbert Scheme, see below.

To understand the equations of the projective embedding of the Hilbert Scheme, the following fact is needed. Given $s < n$ and $m_0 \geq \dots \geq m_s > 0$, let $I \subset S$ be any homogeneous ideal (possibly non saturated, with any Hilbert polynomial), and choose $m \geq m_0$. If $h_I(x) \leq g(m_0,\dots,m_s; x)$ for $x = m$, then this holds for every $x \geq m$. If $h_I(x) = g(m_0,\dots,m_s; x)$ for $x = m, m+1$, then this holds for every $x \geq m$ (see \cite[Chap. 2, Prop. 9.5 and Prop. 10.2]{Ba}).

\subsection{The embedding of the Hilbert Scheme}\label{subsez:immersion}

The \nuovo{Grassmannian} \G{n}{d} is the set of all the $d$-dimensional vector subspaces of the vector space $k^n$. Equivalently, the Grassmannian can be regarded as the set of the $(d-1)$-dimensional projective subspaces of $\p{}(k^n)$.

The Grassmannian can be embedded naturally in a projective space via the Pl\"ucker embedding. Let $L\subseteq k^n$ be a $d$-dimensional vector subspace and let $v_1,\dots,v_d$ be a basis of $L$. The embedding can be defined as follows:
\[
\begin{array}{ccc}
 \G{n}{d} & \hookrightarrow & {\mathbb P}\left(\bigwedge^d k^n\right) \\
 \langle v_1,\dots,v_d\rangle & \longmapsto & [v_1\wedge\dots\wedge v_d]
\end{array}
\]
It is easy to see that, if one considers two different bases for $L$, the wedge products of the elements of each base will only differ by a multiplicative constant, meaning that this map is well defined in the projective space. Moreover this map is injective, because knowing $v_1\wedge\dots\wedge v_d$ you can write equations for $\langle v_1,\dots,v_d\rangle$. This gives an embedding of $\G{n}{d}$ into ${\mathbb P}\left(\bigwedge^d k^n\right) = {\mathbb P}\left(k^{\binom{n}{d}}\right)$. The image of this embedding is the set of alternate tensors of rank $1$. Equations for this subset are given by the Pl\"ucker ideal, an ideal generated by quadratic polynomials with integer coefficients. This identifies the Grassmannian with a projective algebraic variety.

The Grassmannian can be used to construct the embedding of the Hilbert scheme that we will use later for its tropicalization.

Fix a numerical polynomial $p(x):=g(m_0, \dots, m_s;x)$ and a projective space \p{n}, such that $s < n$ and $m_0 \geq \dots \geq m_s > 0$. It is possible to parametrize all closed subschemes of \p{n} having Hilbert polynomial $p$ with the closed points of a scheme \hilb{p}{n}. This scheme is called the \nuovo{Hilbert scheme} and is a fine moduli space for the closed subschemes of \p{n} up to identity.

Consider the number $m_0$. By the properties stated above, we have that $I = \langle I_{m_0}\rangle^{\sat}$, hence $I$ is determined by its component of degree $m_0$.
$S_{m_0}$ is a vector space of dimension $N:=\binom{n+m_0-1}{n-1}$ and $I_{m_0}$ is a vector subspace of codimension $p(m_0)$. We can define the injective map
\[
\begin{array}{ccc}
\left\{I \subset S \ |\ I \mbox{ saturated ideal, } p_I = p \right\} & \hookrightarrow & \G{N}{N-p(m_0)}\\
I & \mapsto & I_{m_0}
\end{array}
\]
To understand the set-theoretical image of this map, we can use the last sentence of the previous subsection. We have to consider only the ideals $I \subset S$ such that $\dim(I_{m_0}) = N - p(m_0)$ and $\dim(I_{m_0+1}) \leq \binom{n+m_0}{n-1} - p(m_0+1)$. Expressed in the Pl\"ucker coordinates, these relations give equations for the image that are polynomials with integer coefficients, see \cite[Chap. 6, 1.2]{Ba}. Hence we have identified the Hilbert scheme $\hilb{p}{n}$ with a projective algebraic subvariety of the Grassmannian \G{N}{N-p(m_0)}.

Composing this embedding with the Pl\"ucker embedding allows us to embed the Hilbert scheme in a projective space as follows: given an homogeneous saturated ideal $I$ with Hilbert polynomial $p$ let $v_1,\dots,v_d$ be a basis of $I_{m_0}$ as a vector space. Then the point of \hilb{p}{n} corresponding to $I$ is sent into $[v_1\wedge\dots\wedge,v_d]$.

\section{Tropical Varieties}

\subsection{Valued fields}

Let $\cappa$ be an algebraically closed field, with a real valued valuation $v:\cappa^* \mapsto \erre$. We will always suppose that $1$ is in the image of the valuation. 

As $\cappa$ is algebraically closed, the image of $v$ is a divisible subgroup of $\erre$, that we will denote by $\Lambda_v$. This subgroup contains $\qu$, hence it is dense in $\erre$. Moreover, the multiplicative group $\cappa^*$ is a divisible group, hence the homomorphism $v:\cappa^* \mapsto \Lambda_v$ has a section, i.e. there exists a group homomorphism $t^{\bullet}:\Lambda_v \freccia \cappa^*$ such that $v(t^w)=w$ (see \cite[lemma 2.1.13]{MS}). Note that $t^{\bullet}$ satisfies $t^{w+w'} = t^w t^{w'}$. We will denote $t^1$ by $t \in \cappa$. 

We denote by $\ocors = \{x\in\cappa \ |\ -v(x) \leq 0\}$ the \nuovo{valuation ring} of $\cappa$, and by $\mcors = \{x\in\cappa \ |\ -v(x) < 0\}$ the unique maximal ideal of $\ocors$. The quotient $\Delta = \ocors / \mcors$ is the \nuovo{residue field} of $\cappa$, an algebraically closed field. The \nuovo{characteristic pair} of the valued field $\cappa$ is the pair of numbers $\mbox{cp}(\cappa,v) = (\mbox{char}(\cappa), \mbox{char}(\Delta))$. This pair can assume the values $(0,0)$, $(0,p)$ or $(p,p)$, where $p$ is a prime number.

In the following we will need to consider valued fields such that the image of the valuation is surjective into $\erre$. Examples of such fields are provided by the fields of \nuovo{transfinite Puiseux series}. If $\Delta$ is an algebraically closed field, the set
$$\Delta((t^\erre)) = \left\{ \sum_{w \in \erre} a_w t^w \ |\ \{w \in \erre \ |\ a_w \neq 0\} \mbox{ is well ordered }\right\} $$
endowed with the operations of sum and product of formal power series is an algebraically closed field, with surjective real valued valuation $v(\sum_{w \in \erre} a_w t^w) = \min \{w \in \erre \ |\ a_w \neq 0\}$. The residue field of $\Delta((t^\erre))$ is $\Delta$, and the formal monomials $t^w$ form the section $t^\bullet$.

The remainder of this subsection is devoted to prove a proposition that will be needed in subsection \ref{subsez:dependence}. We need to construct some small valued fields first. If $k$ is a field, consider the field $k(t)$ of rational functions in one variable. There is only one real valued valuation on this field that is zero on $k^*$ and such that $v(t) = 1$ (see \cite[Thm. 2.1.4]{EP}). The image of this valuation is $\ze$ and the residue field is $k$. 

In a similar way it is possible to see that every valuation of $\qu$ is either zero on $\qu^*$, or it is the $p$-adic valuation, for a prime number $p$ (see again \cite[Thm. 2.1.4]{EP}). The image of this valuation is $\ze$ and the residue field is $\effe_p$. 

These fields are not algebraically closed, hence we need to extend the valuation to their algebraic closure. By \cite[Thm. 3.1.1]{EP} you see that every valuation of a field admits an extension to the algebraic closure, and by \cite[Thm.3.4.3]{EP} you see that this extension is again real valued. There can be many different ways to construct this extension, but any two of them are related by an automorphism of the algebraic closure fixing the base field $k(t)$ or $\qu$, see \cite[Thm. 3.2.15]{EP}. Hence, up to automorphisms, there exists a unique real valued valuation of the field $\overline{k(t)}$ that is zero on $k^*$ and such that $v(t) = 1$. This valuation has image $\qu$, residue field $\overline{k}$ and characteristic pair $(\mbox{char}(k), \mbox{char}(k))$. In the same way, up to automorphisms, there exists a unique valuation of $\overline{\qu}$ that restricts to the $p$-adic valuation on $\qu$. This valuation has image $\qu$, residue field $\overline{\effe_p}$ and characteristic pair $(0,p)$. 

We will denote by $\effe_{(0,0)}$ the field $\overline{\qu(t)}$ with the valuation described above, by $\effe_{(p,p)}$ the field $\overline{\effe_p(t)}$ with the valuation described above, and by $\effe_{(0,p)}$ the field $\overline{\qu}$ with the $p$-adic valuation. These valued fields are, in some sense, universal, as we show in the following proposition:

\begin{prop}    \label{prop:prime field}
Let $\cappa$ be an algebraically closed field, with a real valued valuation $v:\cappa^* \mapsto \erre$. Then $\cappa$ contains a subfield $\effe$ such that $(\effe, v|_{\effe})$ is isomorphic as a valued field to one of the fields $\effe_{(0,0)}, \effe_{(p,p)}, \effe_{(0,p)}$. 
\end{prop}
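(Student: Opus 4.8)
Here is how I would prove Proposition~\ref{prop:prime field}.

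The plan is to split into cases according to the characteristic pair $\mbox{cp}(\cappa,v)$, and in each case to locate inside $\cappa$ a copy of the prime field ($\qu$ or $\effe_p$), enlarge it either by a suitable transcendental element or by its algebraic closure, and then recognize the restricted valuation by means of the uniqueness statements recalled above. A preliminary observation makes the case analysis automatic: the value group $\Lambda_v\subseteq\erre$ is torsion free, so $v$ vanishes on every torsion subgroup of $\cappa^*$, and more generally every element of $\cappa^*$ that is algebraic over the prime field has valuation lying in the divisible hull of the value group of the prime field. In particular, if $\mbox{char}(\cappa)=p>0$ then $v$ is trivial on $\effe_p^*$ and $p=0$ already in $\ocors$, so $\mbox{char}(\Delta)=p$ and $\mbox{cp}=(p,p)$. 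If $\mbox{char}(\cappa)=0$, then $\qu\subseteq\cappa$ and, by the Ostrowski-type result cited above, $v|_{\qu}$ is either trivial --- so that $p\notin\mcors$ for every prime and $\mbox{cp}=(0,0)$ --- or it is the $p$-adic valuation for exactly one prime $p$, in which case $p\in\mcors$ and $\mbox{cp}=(0,p)$.

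In the cases $(0,0)$ and $(p,p)$, let $k$ be the prime field ($\qu$, resp.\ $\effe_p$), on which $v$ is trivial. Using the standing hypothesis $1\in\Lambda_v$, choose $t\in\cappa^*$ with $v(t)=1$. By the observation above, $t$ is transcendental over $k$, since an element algebraic over $k$ would have valuation $0$. Hence $k(t)\subseteq\cappa$ is a rational function field, and $v|_{k(t)}$ is a real-valued valuation that is trivial on $k^*$ with $v(t)=1$; by the uniqueness of such a valuation it is exactly the standard one. Now take the algebraic closure of $k(t)$ inside $\cappa$: it is algebraically closed and algebraic over $k(t)$, hence a copy of $\overline{k(t)}$, and $v$ restricts on it to a real-valued extension of the standard valuation of $k(t)$. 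By the uniqueness up to automorphism of such extensions, this restriction agrees, after an automorphism of $\overline{k(t)}$ fixing $k(t)$, with the valuation defining $\effe_{(0,0)}$ (resp.\ $\effe_{(p,p)}$), which is the desired valued-field isomorphism. In the remaining case $(0,p)$ one proceeds similarly but more directly: take $\overline{\qu}$ to be the algebraic closure of $\qu$ inside $\cappa$; then $v|_{\overline{\qu}}$ is a real-valued extension of the $p$-adic valuation of $\qu$, and the same uniqueness-up-to-automorphism argument identifies $(\overline{\qu},v|_{\overline{\qu}})$ with $\effe_{(0,p)}$.

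The argument is essentially bookkeeping, and there is no deep obstacle; the step that does the real work is the invocation of the uniqueness of extensions of a valuation to the algebraic closure up to an automorphism fixing the base (\cite[Thm.~3.2.15]{EP}), applied to the two valuations on $\overline{k(t)}$, resp.\ $\overline{\qu}$, that restrict to the standard valuation on $k(t)$, resp.\ $\qu$. The one point that requires a little care is the normalization in the case $(0,p)$: one must make sure that the restriction $v|_{\qu}$ is the $p$-adic valuation in the normalization $v(p)=1$ used to define $\effe_{(0,p)}$ (equivalently, allow an order-isomorphism of value groups in the notion of valued-field isomorphism), after which nothing further is needed.
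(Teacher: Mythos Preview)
Your proof is correct and follows essentially the same approach as the paper's: split according to the characteristic, locate the prime field, adjoin a transcendental $t$ with $v(t)=1$ in the equal-characteristic cases, pass to the algebraic closure inside $\cappa$, and invoke the uniqueness (up to automorphism over the base) of the extension of the valuation. You are in fact a bit more careful than the paper---explicitly checking that $t$ is transcendental and flagging the normalization issue in the $(0,p)$ case---but the route is the same.
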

\begin{proof}
If $\mbox{char}(\cappa) = p$, it contains a copy of $\effe_p$. The valuation is zero on $\effe_p^*$ because the multiplicative group $\effe_p^*$ is cyclic. Choose an element $t$ such that $v(t) = 1$. Then $\cappa$ contains a copy of $\overline{\effe_p(t)}$. By the arguments above, the valuation restricted to this field is isomorphic to $\effe_{(p,p)}$.

If $\mbox{char}(\cappa) = 0$, it contains a copy of $\qu$. The valuation, restricted to $\qu^*$, can be zero or a $p$-adic valuation. If it is zero, choose an element $t$ such that $v(t) = 1$. Then $\cappa$ contains a copy of $\overline{\qu(t)}$, and, by the arguments above, the valuation restricted to this field is isomorphic to $\effe_{(0,0)}$. If it is the $p$-adic valuation, then $\cappa$ contains a copy of $\overline{\qu}$, and, by the arguments above, the valuation restricted to this field is isomorphic to $\effe_{(0,p)}$. 
\end{proof}

\subsection{Tropical varieties}\label{subsez:tropicalization}

We denote by $\trop = (\erre,\oplus,\odot)$ the \nuovo{tropical semifield}, with tropical operations $a \oplus b = \max(a,b)$ and $a \odot b = a + b$.

Let $\cappa$ be an algebraically closed field, with a real valued valuation $v:\cappa^* \mapsto \erre$, with image $\Lambda_v$. We often prefer to use the opposite function, and we will call it the \nuovo{tropicalization}:
$$
\begin{array}{cccc}
\tau: &  \cappa^*     &  \longmapsto   &    \trop  \\
      &  x            &  \longmapsto   &    -v(x)
\end{array}
$$

The \nuovo{tropicalization map} on $ {(\cappa^*)}^n$ is the component-wise tropicalization function:
$$
\begin{array}{cccc}
\tau: &  {(\cappa^*)}^n     &  \longmapsto   &    \trop^n  \\
      &  (x_1, \dots, x_n)  &  \longmapsto   &    (\tau(x_1), \dots, \tau(x_n))
\end{array}
$$
The image of this map is $\Lambda_v^n$, a dense subset of $\erre^n$.

Let $I \subset \cappa[x_1, \dots, x_n]$ be an ideal, and let $Z = Z(I) \subset \cappa^n$ be its zero locus, an affine algebraic variety. The image of $Z \cap {(\cappa^*)}^n$ under the tropicalization map will be denoted by $\tau(Z)$ and it is a closed subset of $\Lambda_v^n$. The \nuovo{tropicalization} of $Z$ is the closure of $\tau(Z)$ in $\erre^n$, and it will be denoted by $\tropicalization(V)$. In particular, if $v$ is surjective ($\Lambda_v = \erre$) we have $\tau(V) = \tropicalization(V)$. A \nuovo{tropical variety} is the tropicalization of an algebraic variety. Note that in this way the notion of tropical variety depends on the choice of the valued field $\cappa$. We will show in subsection \ref{subsez:dependence} that it actually depends only on the characteristic pair of $\cappa$ and on the value group $\Lambda_v$.  

A \nuovo{tropical polynomial} is a polynomial in the tropical semifield, an expression of the form:
$$\phi = \bigoplus_{a \in \enne^n} \phi_a \odot x^{\odot a} = \max_{a \in \enne^n} \left( \phi_a + \langle x, a \rangle \right)$$
where $\phi_a \in \trop \cup \{-\infty\}$, there are only a finite number of indices $a \in \enne^n$ such that $\phi_a \neq -\infty$. Note that here $x = (x_1, \dots, x_n)$ is a vector of indeterminates. If all the coefficients $\phi_a$ are $-\infty$, then $\phi$ is the null polynomial. If $\phi$ is a non null tropical polynomial, the \nuovo{tropical zero locus} of $\phi$ is the set
$$T(\phi) = \left\{ \omega \in \trop^n \ |\ \mbox{ the max. in } \max_{a \in \enne^n} \left( \phi_a + \langle x, a \rangle \right) \mbox{ is attained at least twice} \right\} $$
If $\phi$ is the null polynomial, then $T(\phi) = \trop^n$.

If $f = \sum_{a \in \enne^n} f_a x^a \in \cappa[x_1, \dots, x_n]$ is a polynomial, the \nuovo{tropicalization} of $f$ is the tropical polynomial
$$\tau(f) = \bigoplus_{a \in \enne^n} \tau(f_a) \odot x^{\odot a} $$
where $\tau(0) = -\infty$.

The fundamental theorem of tropical geometry (see \cite[Thm. 3.2.4]{MS}) states that if $I  \subset \cappa[x_1, \dots, x_n]$ is an ideal, then
$$\tropicalization(Z(I)) = \bigcap_{f \in I} T(\tau(f))$$ 

It is also possible to find some finite systems of generators $f_1, \dots, f_r$ of the ideal $I$ such that 
$$\tau(Z(I)) = T(\tau(f_1)) \cap \dots \cap  T(\tau(f_r))$$ 
Such a system of generators is called a \nuovo{tropical basis}. It is possible to construct tropical bases with few polynomials of high degree, as in \cite{HT}, but we also show that with the methods similar to the ones of \cite[Thm. 2.9]{BJSST} it is possible to construct tropical bases made up of polynomials of controlled degree, see theorem \ref{grado_m0}.  

The following proposition will be used later in subsection \ref{subsez:dependence}.

\begin{prop}    \label{prop:extension}
Let $(\cappa,v)$ be an algebraically closed valued field as above, let $\effe \subset \cappa$ be an algebraically closed subfield such that $1\in v(\effe)$, and consider the valued field $(\effe,v|_\effe)$. Let $I \subset \effe[x_1, \dots, x_n]$ be an ideal, with zero locus $Z = Z(I) \subset \effe^n$. Consider the extension $I^\cappa$ of $I$ to $\cappa[x_1, \dots, x_n]$, with zero locus $Z^\cappa \subset \cappa^n$. Then
$$\tropicalization(Z) = \tropicalization(Z^\cappa) $$
\end{prop}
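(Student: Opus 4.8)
The plan is to prove the two inclusions $\tropicalization(Z) \subseteq \tropicalization(Z^\cappa)$ and $\tropicalization(Z^\cappa) \subseteq \tropicalization(Z)$ separately, using the fundamental theorem of tropical geometry in the form $\tropicalization(Z(J)) = \bigcap_{f \in J} T(\tau(f))$ together with the observation that tropicalizing a polynomial does not depend on which field we regard its coefficients as living in. The key point is that $I$ and $I^\cappa$ generate ideals with the ``same'' polynomials available: a generating set $f_1, \dots, f_r$ of $I$ over $\effe$ is also a generating set of $I^\cappa$ over $\cappa$, and each $T(\tau(f_i))$ is computed purely from the valuations of the coefficients of $f_i$, which lie in $\effe$ and have the same valuation under $v|_\effe$ and under $v$.

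First I would establish the inclusion $\tropicalization(Z^\cappa) \subseteq \tropicalization(Z)$. Pick a tropical basis $f_1, \dots, f_r$ of $I$ over $\effe$; then $\tau(Z) = \bigcap_i T(\tau(f_i))$. Since $f_1, \dots, f_r$ generate $I^\cappa$ over $\cappa$ as well, the fundamental theorem gives $\tropicalization(Z^\cappa) \subseteq \bigcap_i T(\tau(f_i))$ (this is the inclusion $\tropicalization(Z(J)) \subseteq T(\tau(g))$ valid for any single $g \in J$). The tropical polynomials $\tau(f_i)$ are literally the same whether we use $v|_\effe$ or $v$, so $\bigcap_i T(\tau(f_i)) = \overline{\tau(Z)} = \tropicalization(Z)$, where the closure is taken in $\erre^n$. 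Hence $\tropicalization(Z^\cappa) \subseteq \tropicalization(Z)$.

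For the reverse inclusion $\tropicalization(Z) \subseteq \tropicalization(Z^\cappa)$, I would argue that $\tau(Z)$, the image of $Z\cap(\effe^*)^n$, is contained in the image of $Z^\cappa \cap (\cappa^*)^n$, simply because $Z \cap (\effe^*)^n \subseteq Z^\cappa \cap (\cappa^*)^n$ (points of $\effe^n$ satisfying $I$ also satisfy $I^\cappa$ in $\cappa^n$) and $v|_\effe$ agrees with $v$ on $\effe^*$. Taking closures in $\erre^n$ yields $\tropicalization(Z) \subseteq \tropicalization(Z^\cappa)$. Combining the two inclusions finishes the proof.

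The only subtlety — and the step I'd watch most carefully — is the definitional mismatch coming from possibly non-surjective valuations: $\tropicalization(Z)$ is defined as the closure of $\tau(Z)$ in $\erre^n$, and likewise for $Z^\cappa$, so I must be sure that the fundamental theorem as quoted ($\tropicalization(Z(I)) = \bigcap_{f\in I} T(\tau(f))$) already incorporates that closure and hence that $\tropicalization(Z) = \bigcap_{f\in I} T(\tau(f))$ holds verbatim over $\effe$; since $\effe$ is assumed algebraically closed with $1$ in the value group, the theorem applies to $(\effe, v|_\effe)$ exactly as stated. With that in hand, the two-inclusion argument above is routine, so I do not expect a genuine obstacle, only the need to keep the closures and the choice of base field bookkept correctly.
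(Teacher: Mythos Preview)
Your proof is correct and follows essentially the same two-inclusion strategy as the paper: the inclusion $\tropicalization(Z)\subseteq\tropicalization(Z^\cappa)$ from $Z\subset Z^\cappa$, and the reverse inclusion via the fundamental theorem together with $I\subset I^\cappa$. The only cosmetic difference is that you pass through a tropical basis of $I$, whereas the paper simply observes $\bigcap_{f\in I}T(\tau(f))\supset\bigcap_{f\in I^\cappa}T(\tau(f))$ directly, which avoids invoking the existence of tropical bases.
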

\begin{proof}
One inclusion ($\tropicalization(Z) \subset \tropicalization(Z^\cappa)$) is obvious because $Z \subset Z^\cappa$. To see the reverse inclusion, note that as $I \subset I^\cappa$, we have  
$$\bigcap_{f \in I} T(\tau(f)) \supset \bigcap_{f \in I^\cappa} T(\tau(f)) $$
and then use the fundamental theorem.
\end{proof}

If $I \subset \cappa[x_0, \dots, x_n]$ is an homogeneous ideal, the variety $Z = Z(I)$ is a cone in $\cappa^{n+1}$: if $x \in Z$ and $\lambda \in \cappa$, then $\lambda x \in Z$. The image of $Z$ in the projective space $\cappa\pro^n$ is a projective variety that we will denote by $V = V(I)$. 

The tropicalization $\tropicalization(Z)$ is also a cone but in the tropical sense: if $\omega \in \tropicalization(Z)$ and $\lambda \in \trop$, then $\lambda \odot \omega = (\lambda +\omega_1, \dots, \lambda + \omega_n) \in \tropicalization(Z)$.

The \nuovo{tropical projective space} is the set $\trop\pro^n = \trop^{n+1} / \sim$, where $\sim$ is the \nuovo{tropical projective equivalence relation}:
\[x \sim y \Leftrightarrow \exists \lambda \in \trop : \lambda \odot x = y \Leftrightarrow \exists \lambda \in \erre : (x_0 + \lambda, \dots, x_n + \lambda) = (y_0, \dots, y_n)\]
We will denote by $[\cdot]:\trop^{n+1} \mapsto \trop\pro^n$ the projection onto the quotient, and we will use the homogeneous coordinates notation: $[x] = [x_0: \dots :x_n]$. To visualize $\trop\pro^n$ it is possible to identify it with a subset of $\trop^{n+1}$. If $i \in \{0, \dots, n\}$, the subset $\{ x \in \trop^{n+1} \ |\ x_i = 0\}$ is the analog of an affine piece, and the quotient map $[\cdot]$, restricted to it, is a bijection. A more invariant way for doing the same thing is to consider the set $\{ x \in \trop^{n+1} \ |\ x_0 + \dots + x_n = 0\}$.

The image of $\tropicalization(Z)$ in the tropical projective space is a \nuovo{tropical projective variety}. There is also a \nuovo{projective tropicalization map}, defined on $\cappa\pro^n \setminus C$, where $C$ is the union of the coordinate hyperplanes:
$$\tau:\cappa\pro^n \setminus C \ni [x] \mapsto [\tau(x_0): \dots : \tau(x_n)] \in \trop\pro^n$$ 
The image of $\tau$ will be denoted by $\pro(\Lambda_v^{n+1})$, a dense subset of $\trop\pro^n$. 
If $V$ is a projective variety in $\cappa\pro^n$, we will denote by $\tau(V)$ the image of $V \cap (\cappa\pro^n \setminus C)$ under the projective tropicalization map, a closed subset of $\pro(\Lambda_v^{n+1})$, and by $\tropicalization(V)$ the closure of $\tau(V)$ in $\trop\pro^n$.

\subsection{Initial ideals}

As before, let $\cappa$ be an algebraically closed field, with a real valued valuation $v:\cappa^* \mapsto \erre$, with image $\Lambda_v$.

Let $f = \sum_{a \in \enne^n} f_a x^a \in \cappa[x_1, \dots, x_n]$ be a non-zero polynomial, and let $\omega = (\omega_1, \dots, \omega_n) \in \Lambda_v^n$ be a vector. Denote by $H$ the number $\tau(f)(\omega)
$. Now the polynomial $t^H f(t^{-\omega_1} x_1, \dots, t^{-\omega_n} x_n) \in \ocors[x_1, \dots, x_n] \setminus \mcors[x_1, \dots, x_n]$. Its image in $\Delta[x_1, \dots, x_n]$ is a non-zero polynomial $\ini_\omega(f)$, called the \nuovo{initial form} of $f$ in $\omega$. If $f = 0$, we put $\ini_\omega(f) = 0$. 
 
Let $I \subset \cappa[x_1, \dots, x_n]$ be an ideal. The initial ideal of $I$ is the set $\ini_\omega(I) = \{\ini_\omega(f) \ |\ f \in I\} \subset \Delta[x_1, \dots, x_n]$.

\begin{prop}
The set $\ini_\omega(I)$ is an ideal of $\Delta[x_1, \dots, x_n]$.
\end{prop}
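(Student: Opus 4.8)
The plan is to check the two ideal axioms directly, using the section $t^\bullet\colon\Lambda_v\freccia\cappa^*$ to normalize $\omega$-weights before adding. First I would record how $\ini_\omega$ behaves under three elementary operations on a nonzero $f=\sum_{a} f_a x^a$, each a one-line consequence of the definition together with the fact that the residue map $\ocors[x_1,\dots,x_n]\freccia\Delta[x_1,\dots,x_n]$ is a ring homomorphism; throughout write $t^{-\omega}x$ for $(t^{-\omega_1}x_1,\dots,t^{-\omega_n}x_n)$. (i) If $\tilde\lambda\in\ocors$ is a unit with residue $\lambda\in\Delta^*$, then $\tau(\tilde\lambda f)(\omega)=\tau(f)(\omega)$ and $\ini_\omega(\tilde\lambda f)=\lambda\,\ini_\omega(f)$ (and $\lambda\,\ini_\omega(f)=0$ if $\lambda=0$). (ii) If $c\in\Lambda_v$ then, since $\tau(t^c)=-c$, one has $\tau(t^c f)(\omega)=\tau(f)(\omega)-c$, while $t^{\tau(t^cf)(\omega)}\,(t^cf)(t^{-\omega}x)=t^{\tau(f)(\omega)}\,f(t^{-\omega}x)$, so $\ini_\omega(t^cf)=\ini_\omega(f)$. (iii) For a monomial $x^b$ we have $\langle\omega,b\rangle\in\Lambda_v$ and $(x^bf)(t^{-\omega}x)=t^{-\langle\omega,b\rangle}x^b\,f(t^{-\omega}x)$, giving $\tau(x^bf)(\omega)=\tau(f)(\omega)+\langle\omega,b\rangle$ and $\ini_\omega(x^bf)=x^b\,\ini_\omega(f)$.

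Next I would prove that $\ini_\omega(I)$ is closed under addition, which I expect to be the only genuine point. Let $f,f'\in I$; if one of them is $0$ there is nothing to prove, so assume $f,f'\neq0$ and set $g=\ini_\omega(f)$, $g'=\ini_\omega(f')$, $H=\max(\tau(f)(\omega),\tau(f')(\omega))\in\Lambda_v$. By (ii) the elements $f_1=t^{\tau(f)(\omega)-H}f$ and $f_1'=t^{\tau(f')(\omega)-H}f'$ lie in $I$, satisfy $\ini_\omega(f_1)=g$, $\ini_\omega(f_1')=g'$, and $\tau(f_1)(\omega)=\tau(f_1')(\omega)=H$. Hence $t^H f_1(t^{-\omega}x)$ and $t^H f_1'(t^{-\omega}x)$ lie in $\ocors[x_1,\dots,x_n]$ with residues $g$ and $g'$, so their sum $t^H(f_1+f_1')(t^{-\omega}x)$ lies in $\ocors[x_1,\dots,x_n]$ with residue $g+g'$. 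If $g+g'=0$ we are done, since $0=\ini_\omega(0)\in\ini_\omega(I)$; otherwise $t^H(f_1+f_1')(t^{-\omega}x)\notin\mcors[x_1,\dots,x_n]$, which forces $\tau(f_1+f_1')(\omega)=H$ — were it strictly smaller, multiplying by the relevant power of $t$, which then lies in $\mcors$, would land the polynomial in $\mcors[x_1,\dots,x_n]$, contradicting that $\ini_\omega(f_1+f_1')\neq0$ — and therefore $g+g'=\ini_\omega(f_1+f_1')\in\ini_\omega(I)$.

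Finally I would deduce absorption of multiplication by an arbitrary $h=\sum_b c_b x^b\in\Delta[x_1,\dots,x_n]$: given $g=\ini_\omega(f)$ with $f\in I$, each term $c_b x^b g$ equals $\ini_\omega\!\big(\tilde c_b x^b f\big)$ by (i) and (iii), with $\tilde c_b\in\ocors$ a unit lift of $c_b$ (or $c_b x^b g=0$ when $c_b=0$), so $c_b x^b g\in\ini_\omega(I)$; summing over $b$ by the previous paragraph gives $hg\in\ini_\omega(I)$. Since $\ini_\omega(I)$ contains $0$ (as $0\in I$), this shows it is an ideal. The main obstacle — and the reason the statement is not purely formal — is the addition step: without the section $t^\bullet$ one cannot bring $f$ and $f'$ to a common $\omega$-weight, and the top-weight parts of $f+f'$ need not be controlled by those of $f$ and $f'$ separately.
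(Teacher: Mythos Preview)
Your proof is correct and follows essentially the same strategy as the paper's: use the section $t^\bullet$ to normalize the preimages to a common $\omega$-weight, then observe that when $g+g'\neq 0$ the normalized sum stays outside $\mcors[x_1,\dots,x_n]$ and hence has initial form $g+g'$. The paper states closure under multiplication by monomials as ``clear'' and handles only the addition step explicitly; your facts (i)--(iii) and the absorption paragraph simply spell out what the paper leaves implicit, so the two arguments are the same in substance.
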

\begin{proof}
If $f \in \ini_\omega(I)$, it is clear that for every monomial $x^\alpha$, the product $x^\alpha f \in \ini_\omega(I)$. We only need to verify that if $f,g \in \ini_\omega(I)$ then $f+g \in \ini_\omega(I)$. If $f+g = 0 = \ini_\omega(0)$ there is nothing to prove, hence we can suppose that $f+g \neq 0$.

Let $F,G \in I$ such that $\ini_\omega(F) = f$, $\ini_\omega(G) = g$. Up to multiplying by an element of the section $t^\bullet$, we may suppose that $F(t^{\omega_1} x_1, \dots, t^{\omega_n} x_n)$, $G(t^{\omega_1} x_1, \dots, t^{\omega_n} x_n)$ $\in$ $\ocors[x_1, \dots, x_n] \setminus \mcors[x_1, \dots, x_n]$. As $f+g \neq 0$, also $(F+G)(t^{\omega_1} x_1, \dots, t^{\omega_n} x_n)$ $\in$ $\ocors[x_1, \dots, x_n] \setminus \mcors[x_1, \dots, x_n]$. Hence $\ini_\omega(F+G) = f+g$.  
\end{proof}

Note that the definitions we have given of the initial form and initial ideal only work because we suppose that $\omega \in \Lambda_v^n$. If $f$ is a polynomial, $\ini_\omega(f)$ is a monomial if and only if the maximum in $\tau(f)(\omega)$ is attained only once. Therefore tropical varieties can be described in terms of initial ideals: $\tau(V(I))$ is the set of all $\omega \in \Lambda_v^n$ such that $\ini_\omega(I)$ contains no monomials, and $\tropicalization(V(I))$ is the closure of this set.  

\begin{lemma}    \label{lemma:dimension}
Let $L \subset \cappa^n$ be a vector subspace. Denote by $\pi$ the quotient of $\ocors$-modules $\pi:\ocors^n \freccia \ocors^n / \mcors^n = \Delta^n$. Then the image $L' = \pi(L\cap\ocors^n)$ is a vector subspace of $\Delta^n$ with $\dim_\Delta(L') = \dim_\cappa(L)$.
\end{lemma}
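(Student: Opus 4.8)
The plan is to check first that $L'$ is a $\Delta$-subspace at all, and then to prove the two inequalities $\dim_\Delta(L')\le\dim_\cappa(L)$ and $\dim_\Delta(L')\ge\dim_\cappa(L)$ separately. That $L'$ is a $\Delta$-subspace is essentially formal: $L\cap\ocors^n$ is an $\ocors$-submodule of $\ocors^n$, since $L$ is stable under multiplication by all of $\cappa$ and a fortiori by $\ocors$; hence its image $L'$ under the $\ocors$-linear map $\pi$ is an $\ocors$-submodule of $\Delta^n$, and since $\mcors$ annihilates $\Delta^n$ this is the same thing as a $\Delta$-subspace.

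Write $d=\dim_\cappa(L)$. For the upper bound, I would suppose that $\bar w_1,\dots,\bar w_{d+1}\in L'$ are $\Delta$-linearly independent and lift them to $w_1,\dots,w_{d+1}\in L\cap\ocors^n$. Since $\dim_\cappa(L)=d$, there is a nontrivial $\cappa$-linear relation $\sum_i\lambda_i w_i=0$. Choosing an index $j$ for which $v(\lambda_j)$ is minimal among the $\lambda_i\ne 0$ (such $j$ exists because only finitely many terms occur and $v$ is real valued) and dividing the relation through by $\lambda_j$, I may assume every coefficient lies in $\ocors$ and $\lambda_j=1$; reducing modulo $\mcors$ then yields a nontrivial $\Delta$-linear relation among $\bar w_1,\dots,\bar w_{d+1}$, a contradiction. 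Thus $\dim_\Delta(L')\le d$.

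For the lower bound I would produce, by induction on $d$, a basis $w_1,\dots,w_d$ of $L$ with all $w_i\in\ocors^n$ and with $\pi(w_1),\dots,\pi(w_d)$ linearly independent over $\Delta$; this immediately gives $\dim_\Delta(L')\ge d$ and hence equality. The case $d=0$ is vacuous. For $d\ge 1$, pick any nonzero $v\in L$ and rescale it by $t^{-c}$, where $c$ is the minimum of the valuations of the coordinates of $v$, to obtain $w_1\in\ocors^n\setminus\mcors^n$; some coordinate, say the $j$-th, of $w_1$ is then a unit of $\ocors$. The functional $u\mapsto u_j$ is nonzero on $L$, so its kernel $L_1\subset L$ has dimension $d-1$ and $L=\cappa w_1\oplus L_1$; applying the inductive hypothesis to $L_1$ gives $w_2,\dots,w_d\in\ocors^n$ with independent reductions. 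Since $w_2,\dots,w_d$ have vanishing $j$-th coordinate while $\pi(w_1)$ does not, the coefficient of $\pi(w_1)$ must vanish in any $\Delta$-linear relation among $\pi(w_1),\dots,\pi(w_d)$, and then the remaining coefficients vanish by the inductive independence. I do not expect a genuine obstacle; the only point worth keeping in mind is that $L\cap\ocors^n$ need not be a finitely generated $\ocors$-module when $\Lambda_v$ is dense, so one should avoid arguments (such as a Smith normal form over $\ocors$) that presuppose finite generation — the inductive construction above sidesteps this.
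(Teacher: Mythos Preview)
Your proof is correct. Both inequalities are argued cleanly, and your remark about avoiding finite-generation hypotheses on $L\cap\ocors^n$ is apt.

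Your route, however, differs from the paper's. The paper does not split the argument into two inequalities; instead it sets $h=\dim_\Delta(L')$, lifts a basis $v_1,\dots,v_h$ of $L'$ to elements $w_1,\dots,w_h\in L\cap\ocors^n$, extends $v_1,\dots,v_h$ to a basis $v_1,\dots,v_n$ of $\Delta^n$, and lifts the remaining $v_i$ to $w_i\in\ocors^n$. It then shows (by the same scale-and-reduce trick you use for the upper bound) that $w_1,\dots,w_n$ are $\cappa$-independent, and that the span $B$ of $w_{h+1},\dots,w_n$ meets $L$ only in $0$; since the span $A$ of $w_1,\dots,w_h$ lies in $L$ and $A\oplus B=\cappa^n$, this forces $L=A$, so $\dim_\cappa L=h$ in one stroke. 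Your inductive construction via a coordinate projection is arguably more elementary and self-contained. The trade-off is that the paper's argument produces, as a by-product, lifts $w_{h+1},\dots,w_n$ whose classes form a basis of $\cappa^n/L$; this is exactly what is invoked later in the proof of Theorem~\ref{grado_m0} (``by reasoning as in the proof of lemma~\ref{lemma:dimension}\dots''), so the paper's version is tailored for reuse there.
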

\begin{proof}
$L'$ is an $\ocors$-submodule of $\Delta^n$, hence it is a vector subspace. Put $h=\dim_\Delta(L')$. Let $v_1, \dots, v_h$ be a basis of $L'$, and complete it with vectors $v_{h+1}, \dots, v_n$ to a basis of $\Delta^n$. For every $i \leq h$ it is possible to find an element $w_i \in \ocors^n \cap L$ such that $\pi(w_i) = v_i$. For every $i > h$ it is possible to find an element $w_i \in \ocors^n$ such that $\pi(w_i) = v_i$, and $w_i$ is necessarily not in $L$. 

The elements $w_1, \dots, w_n$ are linearly independent over $\cappa$: if $a_1 w_1 + \dots + a_n w_n = 0$, and some of the $a_i$ is not zero, up to multiplying all the $a_i$'s by an element of the section $t^\bullet$, we can suppose that all the $a_i$'s are in $\ocors$, and some of them is not in $\mcors$. Then $\pi(a_1) v_1 + \dots + \pi(a_n) v_n = 0$ is a non-trivial linear combination. 

Let $A = \linspan(w_1, \dots, w_h)$ and $B = \linspan{w_{h+1}, \dots, w_n}$. We know that $A \subset L$. We only need to show that $B\cap L = (0)$, this implies $L = A$.

Let $x \in B\cap L$, $x = a_{h+1} w_{h+1} + \dots + a_n w_n$. If $x \neq 0$, up to multiplying all the $a_i$'s by an element of the section $t^\bullet$, we can suppose that all the $a_i$'s are in $\ocors$, and some of them is not in $\mcors$. Now $\pi(x) = \pi(a_{h+1}) v_{h+1} + \dots + \pi(a_n) v_n \in L'$ because $x \in L$. Hence $\pi(x) = 0$, and this forces all $\pi(a_{h+1}), \dots, \pi(a_n)$ to be zero, which is a contradiction. 
\end{proof}

\begin{prop}
Let $I \subset \cappa[x_0, \dots, x_n]$ be an homogeneous ideal, and let\linebreak $\omega = (\omega_0, \dots, \omega_n) \in \Lambda_v^{n+1}$. Then $\ini_\omega(I)$ is an homogeneous ideal, with the same Hilbert function as $I$. 
\end{prop}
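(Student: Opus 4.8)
The plan is to prove both assertions simultaneously by exhibiting each graded piece of $\ini_\omega(I)$ as the reduction of a suitable $\ocors$-lattice and then invoking Lemma~\ref{lemma:dimension}. First I would fix a degree $d$ and identify $S_d = \cappa[x_0,\dots,x_n]_d$ with $\cappa^{r_d}$, $r_d = \binom{n+d}{n}$, via the monomial basis; let $\Omega_d \subset S_d$ be the $\ocors$-lattice spanned by the degree-$d$ monomials, so that $\pi\colon \Omega_d \freccia \Omega_d/\mcors\Omega_d = \Delta[x_0,\dots,x_n]_d$ is exactly the kind of reduction map to which Lemma~\ref{lemma:dimension} applies. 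The substitution $x_i \mapsto t^{-\omega_i}x_i$ defines a $\cappa$-algebra automorphism $\phi$ of $\cappa[x_0,\dots,x_n]$ preserving each $S_d$ (and extending $\pi$ over all of $\ocors[x_0,\dots,x_n]$ in the obvious way); put $L_d = \phi(I_d) \subset S_d$, a $\cappa$-subspace with $\dim_\cappa L_d = \dim_\cappa I_d$ which, being a $\cappa$-subspace, is stable under multiplication by every $t^w$, $w \in \Lambda_v$.

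The core of the argument is the identity
\[
\ini_\omega(I) \;=\; \bigoplus_{d\ge 0}\, \pi\!\left(L_d \cap \Omega_d\right),
\]
which at once yields the homogeneity of $\ini_\omega(I)$ together with the formula $\ini_\omega(I)_d = \pi(L_d\cap\Omega_d)$. For the inclusion $\supseteq$, I would take $w \in L_d \cap \Omega_d$: if $w \in \mcors\Omega_d$ then $\pi(w)=0=\ini_\omega(0)$; otherwise $w = \phi(f)$ for a unique homogeneous $f \in I_d$, and since the least valuation among the coefficients of $\phi(f)$ equals $-\tau(f)(\omega)$, the hypotheses $w\in\Omega_d$, $w\notin\mcors\Omega_d$ force $\tau(f)(\omega)=0$, so $\pi(w) = \ini_\omega(f) \in \ini_\omega(I)$. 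As $\ini_\omega(I)$ is an ideal and the subspaces $\pi(L_d\cap\Omega_d)$ live in distinct degrees, their direct sum is contained in $\ini_\omega(I)$. For the inclusion $\subseteq$, I would take any nonzero $f \in I$, write $f = \sum_d f_d$ with $f_d \in I_d$ (here using that $I$ is homogeneous), and set $H = \tau(f)(\omega)$. The crucial point is that $H = \max_d \tau(f_d)(\omega)$, hence $H \ge \tau(f_d)(\omega)$ for every $d$; therefore $t^H\phi(f_d) \in \Omega_d$, and since $t^H\phi(f_d) \in t^H L_d = L_d$ we get $t^H\phi(f_d) \in L_d\cap\Omega_d$, so that
\[
\ini_\omega(f) \;=\; \pi\!\left(t^H\phi(f)\right) \;=\; \sum_d \pi\!\left(t^H\phi(f_d)\right) \;\in\; \bigoplus_d \pi\!\left(L_d\cap\Omega_d\right).
\]

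Finally, Lemma~\ref{lemma:dimension} gives $\dim_\Delta \ini_\omega(I)_d = \dim_\Delta \pi(L_d\cap\Omega_d) = \dim_\cappa L_d = \dim_\cappa I_d$ for every $d$, and since $\dim_\Delta \Delta[x_0,\dots,x_n]_d = r_d = \dim_\cappa S_d$, this is precisely the statement that $I$ and $\ini_\omega(I)$ have the same Hilbert function. I expect the only real obstacle to be the inclusion $\subseteq$ above: because $\ini_\omega$ does not commute with addition, the homogeneity of $\ini_\omega(I)$ is not formal, and what makes it work is that the single normalizing exponent $H=\tau(f)(\omega)$ dominates the exponent $\tau(f_d)(\omega)$ needed to normalize each homogeneous component $f_d$ separately, so that reduction modulo $\mcors$ splits $\ini_\omega(f)$ into a sum of homogeneous pieces each already known to lie in $\ini_\omega(I)$.
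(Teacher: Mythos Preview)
Your proof is correct and follows essentially the same strategy as the paper's: both arguments identify $\ini_\omega(I)_d$ as the reduction modulo $\mcors$ of the degree-$d$ part of the (suitably twisted) ideal intersected with the $\ocors$-lattice spanned by monomials, and then invoke Lemma~\ref{lemma:dimension} for the dimension count. The only cosmetic difference is that the paper first normalizes to $\omega=0$ via the substitution $x_i\mapsto t^{\omega_i}x_i$ and then works directly with $I\cap\ocors[x_0,\dots,x_n]$, whereas you keep $\omega$ arbitrary and encode the same substitution in the spaces $L_d=\phi(I_d)$; your verification of the inclusion $\subseteq$ (using that $H=\max_d\tau(f_d)(\omega)$ so a single normalizing factor $t^H$ works for every homogeneous piece) is exactly the content of the paper's sentence ``the homogeneous components of $f$ are then the initial forms of the homogeneous components of $F$ not contained in $\mcors[x_1,\dots,x_n]$,'' spelled out in full.
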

\begin{proof}
The map 
$$\cappa[x_0, \dots, x_n] \ni f(x_0, \dots, x_n) \mapsto f(t^{\omega_1} x_1, \dots, t^{\omega_n} x_n) \in \cappa[x_0, \dots, x_n]$$
is an isomorphism of $\cappa[x_0, \dots, x_n]$. Up to this isomorphism, we can suppose that $\omega = 0$.

If $f \in \ini_0(I)$, there is $F \in$ $\ocors[x_1, \dots, x_n] \setminus \mcors[x_1, \dots, x_n]$ such that $f = \ini_0(F)$. The homogeneous components of $f$ are then the initial forms of the homogeneous components of $F$ not contained in $\mcors[x_1, \dots, x_n]$. Hence $\ini_0(I)$ is an homogeneous ideal.

The ideal $\ini_0(I)$ is the image in $\Delta[x_0, \dots, x_n]$ of $I \cap \ocors[x_0, \dots, x_n]$, via the quotient map $\ocors[x_0, \dots, x_n] \mapsto \ocors[x_0, \dots, x_n]/\mcors[x_0, \dots, x_n] = \Delta[x_0, \dots, x_n]$. For every $d \in \enne$, the homogeneous component of degree $d$, ${(\ini_0(I))}_d$, is the image in ${(\Delta[x_0, \dots, x_n])}_d$ of $I_d \cap {(\ocors[x_0, \dots, x_n])}_d$. By previous lemma, $\dim_\Delta({(\ini_0(I))}_d) = \dim_\cappa(I_d)$.
\end{proof}   

Note that even if $I$ is a saturated homogeneous ideal, $\ini_\omega(I)$ need not be saturated.

\begin{prop}
Let $I \subset \cappa[x_0, \dots, x_n]$ be a saturated homogeneous ideal, with Hilbert polynomial $g(m_0, \dots, m_s; x)$. Then for every $\omega \in \erre^{n+1}$, if the initial ideal $\ini_\omega(I)$ contains a monomial, it also contains a monomial of degree not greater than $m_0$.
\end{prop}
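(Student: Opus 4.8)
The plan is to reduce the statement, in two easy steps, to an assertion about a single saturated ideal and its coordinate hyperplanes, and then to isolate the degree bound on monomials that makes this last assertion work. First I would dispose of the passage from the value group to all of $\erre^{n+1}$: since $\ini_\omega$ is a priori defined only for $\omega\in\Lambda_v^{n+1}$, for a general $\omega\in\erre^{n+1}$ I would replace $(\cappa,v)$ by an algebraically closed valued extension $(\cappa',v')$ whose value group contains the coordinates of $\omega$, and replace $I$ by $I\cappa'$. Faithful flatness of $\cappa\to\cappa'$ guarantees that $I\cappa'$ has the same Hilbert function, hence the same Hilbert polynomial $g(m_0,\dots,m_s;x)$, as $I$, and is again saturated, so we may assume $\omega\in\Lambda_v^{n+1}$. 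Put $J=\ini_\omega(I)\subset\Delta[x_0,\dots,x_n]$; as proved above, $J$ is homogeneous with the same Hilbert function as $I$.

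Next I would pass to the saturation. Let $K=J^{\sat}$; it is a saturated homogeneous ideal with $p_K=p_J=g(m_0,\dots,m_s;x)$, so $\reg(K)\le m_0$ by the bound recalled in Subsection~\ref{subsez:Hilb poly}, and likewise $\reg(I)\le m_0$. Using $h_{(\cdot)}(d)=p(d)$ for $d\ge\reg(\cdot)-1$ we get
\[
h_J(d)=h_I(d)=g(m_0,\dots,m_s;d)=h_K(d)\qquad\text{for all }d\ge m_0-1 .
\]
Since $J_d\subseteq K_d$ with equal dimension for $d\ge m_0-1$, we obtain $J_d=K_d$ for all such $d$, in particular $J_{m_0}=K_{m_0}$. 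Now suppose $J$ contains a monomial; then so does $K\supseteq J$. If one knew that $K$ contains a monomial of some degree $e\le m_0$, multiplying it by $x_0^{\,m_0-e}$ would place a monomial in $K_{m_0}=J_{m_0}$, giving the desired monomial of degree $m_0$ in $J$. So everything reduces to: \emph{a saturated homogeneous ideal with $\reg(K)\le m_0$ that contains a monomial contains one of degree at most $m_0$}; I would deduce this from the cleaner fact that in any saturated homogeneous ideal $K$ the least degree of a monomial is at most $\reg(K)$.

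The crux is thus this last statement about a single saturated ideal $K$, and this is where the arithmetic degree and primary decomposition enter (the details being what Section~\ref{sez:bound_monomials} carries out). The skeleton I have in mind: since $K$ contains a monomial, $\sqrt K$ contains a squarefree monomial, and I would pick one, $\mu=x_{i_1}\cdots x_{i_k}$, minimal for divisibility; minimality forces, for each $\ell$, a minimal prime of $K$ containing $x_{i_\ell}$ but none of the other $x_{i_{\ell'}}$. Taking a primary decomposition $K=\bigcap_j Q_j$, every associated prime $\mathfrak p_j$ contains $\mu$, hence some variable $x_{i_{\ell(j)}}$, and the smallest power of $x_{i_{\ell(j)}}$ lying in $Q_j$ is bounded by the length $\mult_K(\mathfrak p_j)$, hence by $\adeg(K)$. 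Assembling these local contributions produces a monomial of $K$, and the step I expect to be the main obstacle is to organize this assembly — and to compare $\adeg(K)$ and the primary structure with the numbers $m_0,\dots,m_s$ — sharply enough that the resulting degree is bounded by $\reg(K)$ (equivalently by $m_0$) rather than by a coarser quantity such as $\adeg(K)$ or $k\cdot\adeg(K)$. This sharp analysis, via the arithmetic degree and primary decomposition, is exactly the content of Section~\ref{sez:bound_monomials}.
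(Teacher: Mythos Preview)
Your approach is essentially the paper's: pass to the saturation $K$ of $J=\ini_\omega(I)$, observe that $J_{m_0}=K_{m_0}$ because both Hilbert functions equal $g(m_0,\dots,m_s;m_0)$ in degree $m_0$, and reduce everything to the assertion that a saturated ideal with Hilbert polynomial $g(m_0,\dots,m_s;x)$ containing a monomial contains one of degree at most $m_0$ --- which is exactly Lemma~\ref{lemma:monomi contenuti}. Your field-extension step to make sense of $\ini_\omega$ for $\omega\notin\Lambda_v^{\,n+1}$ is a legitimate addition the paper silently glosses over.

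One correction to your sketch of Section~\ref{sez:bound_monomials}: you aim for the bound $\reg(K)$ and describe $\adeg(K)$ as a ``coarser quantity'' to be sharpened away, but the inequality runs the other way. The theorem in Subsection~\ref{subsez:arit deg} (via Hartshorne's tight fans) gives $\adeg(K)\le m_0$, and Lemma~\ref{lemma:monomi contenuti} then yields
\[
(\text{least monomial degree in }K)\ \le\ B\ \le\ \adeg(K)\ \le\ m_0
\]
directly, with $B=\sum_i\mult_K(P_i)$. The regularity of $K$ plays no role in Section~\ref{sez:bound_monomials}; there is no obstacle of the kind you anticipate --- $\adeg(K)$ is already the sharp enough tool, not a crude substitute.
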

\begin{proof}
If $\ini_\omega(I)$ contains a monomial, then its saturation ${(\ini_\omega(I))}^{\sat}$ also contains a monomial. By lemma \ref{lemma:monomi contenuti} (to be proved in the next subsection), ${(\ini_\omega(I))}^{\sat}$ contains a monomial of degree $m_0$.

As ${(\ini_\omega(I))}^{\sat}$ is saturated, its Hilbert function in degree $m_0$ is equal to its Hilbert polynomial $g(m_0, \dots, m_s; m_0)$ (see discussion in subsection \ref{subsez:Hilb poly}). The Hilbert function of $\ini_\omega(I)$ is equal to the Hilbert function of $I$, that is saturated, hence in degree $m_0$ also this Hilbert function is equal to the Hilbert polynomial $g(m_0, \dots, m_s; m_0)$. In particular, in degree $m_0$, the Hilbert functions of $\ini_\omega(I)$ and its saturated are equal, hence their components in degree $m_0$ coincide. The monomial of degree $m_0$ we found in ${(\ini_\omega(I))}^{\sat}$ is also in $\ini_\omega(I)$.
\end{proof}

Now we use the previous proposition give a bound on the degree of a tropical basis of a saturated ideal in terms of its Hilbert polynomial. The tropical basis is constructed by adapting the methods of \cite[Thm. 2.9]{BJSST} to the ``non-constant coefficients'' case.

\begin{thm}         \label{grado_m0}
Let $I \subset \cappa[x_0, \dots, x_n]$ be a saturated homogeneous ideal, with Hilbert polynomial $g(m_0, \dots, m_s; x)$. Then there exist a tropical basis $f_1, \dots, f_s \in I$ such that for all $i$, $\deg(f_i) \leq m_0$. In particular, if $I_{m_0}$ denotes the component of degree $m_0$ of $I$, we have
$$\tropicalization(Z(I)) = \bigcap_{f \in I_{m_0}} T(\tau(f))$$
\end{thm}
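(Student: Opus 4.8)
The strategy is to adapt the construction of a tropical basis from \cite[Thm. 2.9]{BJSST} to the case of a valued field with nontrivial valuation, while keeping track of degrees using the preceding proposition. The starting point is the characterization of $\tropicalization(Z(I))$ in terms of initial ideals: a point $\omega \in \Lambda_v^{n+1}$ lies in $\tau(Z(I))$ if and only if $\ini_\omega(I)$ contains no monomial. So we need finitely many polynomials $f_1, \dots, f_r \in I$ of degree at most $m_0$ such that, whenever $\ini_\omega(I)$ contains a monomial, at least one of the $\ini_\omega(f_i)$ is already a monomial (equivalently, $\omega \notin T(\tau(f_i))$). By the previous proposition, if $\ini_\omega(I)$ contains a monomial then it contains a monomial $x^a$ of degree $|a| \le m_0$, and $x^a \in \ini_\omega(I)$ means precisely that there is some $f \in I$ with $\ini_\omega(f) = x^a$, i.e. $f \in I_{\le m_0}$ (we may take $f$ homogeneous of degree $|a|$, using homogeneity of the initial ideal) whose initial form at $\omega$ is the single monomial $x^a$.

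**The finiteness argument.** This reduces the problem to producing, from the infinite family $\{T(\tau(f)) : f \in I, \deg f \le m_0\}$, a finite subfamily with the same intersection — and that intersection is $\tropicalization(Z(I))$ by the argument just given together with the fundamental theorem. I would argue as in \cite{BJSST}: for each fixed degree $d \le m_0$, the space $I_d$ is a finite-dimensional $\cappa$-vector space. The condition ``$\ini_\omega(f)$ is not a monomial'' is, for fixed $\omega$, a Zariski-type condition on the coefficients of $f$; more usefully, for fixed $f$ the set $\trop^{n+1}\setminus T(\tau(f))$ is an open polyhedral region. The key combinatorial input is that as $\omega$ ranges over $\erre^{n+1}$, the initial ideal $\ini_\omega(I)$ takes only finitely many values, inducing a finite polyhedral (Gröbner) fan structure; on each open cone $\sigma$ of this fan where $\ini_\omega(I)$ contains a monomial of degree $\le m_0$, pick one $f_\sigma \in I_{\le m_0}$ whose initial form along $\sigma$ is that monomial. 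The finite set $\{f_\sigma\}$, together perhaps with the chosen homogenizations, is the desired tropical basis; one checks $\bigcap_\sigma T(\tau(f_\sigma))$ omits exactly the points not in $\tropicalization(Z(I))$.

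**The main obstacle.** The technical heart, and the step I expect to cause the most trouble, is establishing the finiteness of the Gröbner fan in the valued (non-constant-coefficient) setting: over a trivially valued field this is classical, but here $\ini_\omega$ is defined via the section $t^\bullet$ and lands in the residue field $\Delta[x_0,\dots,x_n]$, so one must check that Gröbner-fan finiteness still holds — this is exactly the kind of thing handled in \cite[Thm. 3.2.3]{MS} and adapted in \cite{BJSST}, and the excerpt signals it will follow those methods. A secondary subtlety is passing cleanly between the affine picture (where initial ideals are defined) and homogeneous ideals of bounded degree: one must verify that taking $f$ homogeneous of degree $|a| \le m_0$ with $\ini_\omega(f)=x^a$ is always possible, which follows from the homogeneity of $\ini_\omega(I)$ proved above. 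Once the finite fan is in hand, the final displayed equation $\tropicalization(Z(I)) = \bigcap_{f\in I_{m_0}} T(\tau(f))$ follows: the inclusion $\subseteq$ is the fundamental theorem, and $\supseteq$ holds because every $f_\sigma$ of degree $d < m_0$ can be replaced by all the degree-$m_0$ multiples $x^b f_\sigma$ (whose initial forms are still monomials at the relevant $\omega$), so the intersection over all of $I_{m_0}$ is contained in the finite intersection giving $\tropicalization(Z(I))$.
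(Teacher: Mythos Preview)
Your overall architecture matches the paper's: invoke the finiteness of initial ideals (the Gr\"obner complex in the valued setting), and for each initial ideal containing a monomial produce a witness $f$ in $I$ of degree at most $m_0$ whose initial form is that monomial. You also correctly identify the finiteness result as something to be imported (the paper simply cites \cite[Thm.~2.2.1]{Spe} or \cite[Thm.~2.4.11]{MS}).

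However, you skate past the step that the paper treats as the heart of the argument. You write: ``pick one $f_\sigma \in I_{\le m_0}$ whose initial form along $\sigma$ is that monomial.'' The existence of such an $f_\sigma$, valid \emph{uniformly} for every $\omega$ in the cell $\sigma$, is not automatic. Knowing $x^a \in \ini_\omega(I)$ gives you some $F \in I$ with $\ini_\omega(F) = x^a$ at that particular $\omega$; but for a different $\omega'$ in the same cell (same initial ideal), $\ini_{\omega'}(F)$ may well fail to be $x^a$, or fail to be a monomial at all. Constancy of $\ini_\omega(I)$ on a cell does not imply constancy of $\ini_\omega(F)$ for an arbitrary $F \in I$. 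The paper handles this by an explicit construction: extend $x^a$ to a basis $g_1 = x^a, g_2, \dots, g_h$ of $(\ini_{\omega_i}(I))_{m_0}$, complete to a basis of $S_{m_0}$ by monomials $g_{h+1}, \dots, g_N$, and write $x^a = f_i + \sum_{j>h} c_j g_j$ with $f_i \in I_{m_0}$ (this is possible because the $g_j$, $j>h$, descend to a basis of $S_{m_0}/I_{m_0}$, by the dimension lemma). The point is that this $f_i$ depends only on the initial ideal, not on the particular $\omega$; one then checks that $\ini_\omega(f_i) = x^a$ for every $\omega$ realizing that initial ideal, because $\ini_\omega(f_i)$ lies in $\ini_\omega(I)$ while the monomials $g_{j}$, $j>h$, span a complement to $(\ini_\omega(I))_{m_0}$. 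The paper explicitly flags this as a valued-field substitute for the division algorithm, made necessary because $\ini_\omega(I)$ is not in general a monomial ideal. Your plan needs this step; what you called the ``main obstacle'' (Gr\"obner-complex finiteness) is in fact off-the-shelf, whereas the witness construction is where the work is.
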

\begin{proof}
When $\omega$ varies in $\Lambda_v^n$, the initial ideals $\ini_\omega(I)$ assume only a finite number of values (see \cite[Thm. 2.2.1]{Spe} or \cite[Thm. 2.4.11]{MS}). We choose $\omega_1, \dots, \omega_p$ such that $\ini_{\omega_1}(I), \dots, \ini_{\omega_p}(I)$ are all the initial ideals containing monomials. 

For every one of these $\omega_i$ we will construct a polynomial $f_i \in I$ in the following way. As $\ini_{\omega_i}(I)$ contains a monomial, by previous proposition we know that it also contains a monomial $x^a$ of degree $m_0$. Choose a basis $g_1, \dots, g_h$ of ${(\ini_{\omega_i}(I))}_{m_0}$ (the component of degree $m_0$) such that $g_1 = x^a$. As the set of all monomials of degree $m_0$ is a basis of ${(\Delta[x_0, \dots, x_n])}_{m_0}$, the independent set $g_1, \dots g_h$ may be extended to a basis of ${(\Delta[x_0, \dots, x_n])}_{m_0}$ by adding monomials $g_{h+1}, \dots g_N$. Now $g_1$ and $g_{h+1}, \dots g_N$ may also be interpreted as monomials in ${(\cappa[x_0, \dots, x_n])}_{m_0}$. As they are monomials, for every $\omega \in \Lambda_v^n$ we have $\ini_\omega(g_j) = g_j$ (for $j = 1$ or $j > h$). By reasoning as in the proof of lemma \ref{lemma:dimension}, the elements $g_{h+1}, \dots, g_N$ give a basis of ${(\cappa[x_0, \dots, x_n])}_{m_0} / I_{m_0}$, hence there is a unique expression 
$$g_1 = f_i + \sum_{j>h} c_j g_j$$
where $f_i \in I_{m_0}$. $f_i$ is the polynomial we searched for, its construction only depends on $\ini_{\omega_i}(I)$, and not on the particular value of $\omega_i$ realizing this initial ideal. Now consider any $\omega$ such that $\ini_{\omega}(I) = \ini_{\omega_i}(I)$. We want to see that $\ini_{\omega}(f_i) = g_1 = x^a$. This is because $\ini_\omega(f_i)$ must be a linear combination of $g_1$ and the $g_j$ with $j > h$. But the $g_j$ with $j > h$ form a basis of a vector subspace of ${(\Delta[x_0, \dots, x_n])}_{m_0}$ that is complementary to $\ini_\omega(I)$, while $\ini_\omega(f_i)$ is in $\ini_\omega(I)$. Hence $\ini_\omega(f_i) = g_1$.

Now that we have constructed the polynomials $f_1, \dots, f_p$ we add to them other polynomials $f_{p+1}, \dots, f_s$ such that they generate the ideal $I$. We have to prove that, if $T = T(\tau(f_1)) \cap \dots \cap T(f_s)$, we have 
$$\tropicalization(Z(I)) = T$$
The inclusion $\tropicalization(Z(I)) \subset T$ is clear. To show the reverse inclusion, first note that $T$  is a finite intersection of tropical hypersurfaces, hence it is a $\Lambda_v$-rational polyhedral complex. In particular, $T \cap \Lambda_v^n$ is dense in $T$. For this reason we only need to verify that every $\omega \in T \cap \Lambda_v^n$ is in $\tropicalization(Z(I))$, or, conversely, that if $\omega \in \Lambda_v^n$ is not in $\tropicalization(Z(I))$, then it is not in $T$. 

We have that $\ini_\omega(I) = \ini_{\omega_i}(I)$ for some $i$. Then the polynomial $f_i$ has the property that $\ini_\omega(f_i)$ is a monomial. Hence $\omega$ is not contained in $T(\tau(f_i))$.
\end{proof}

Note that the technique used in the previous proof to construct the polynomials $f_i$ is similar to the division algorithm in standard Gr\"obner bases theory. The difference is that in this case the initial ideals $\ini_\omega(I)$ are not, in general, monomial ideals hence the monomials not contained in $\ini_\omega(I)$ does not form a basis for a complementary subspace. To overcome this problem we had to choose a subset of those monomials forming a basis for a complementary subset, hence the ``remainder'' of the division is not canonically determined, but it depends on this choice.

\section{Applications}

\subsection{Tropicalization of the Hilbert Scheme}

Let $\cappa$ be an algebraically closed field, with a real valued valuation $v:\cappa^* \mapsto \erre$. Denote by $\Lambda_v$ the image of $v$, as above. Fix a projective space $\cappa\pro^n$, and consider a numerical polynomial $p(x):=g(m_0,\dots,m_s;x)$ such that $s < n$ and $m_0 \geq \dots \geq m_s > 0$. To simplify notation, let $N:=\binom{n+m_0-1}{n-1}$, and $M = \binom{N}{p(m_0)}$. Using the embedding described in subsection \ref{subsez:immersion}, we can identify the Hilbert scheme \hilb{p}{n} with an algebraic subvariety of the projective space $\cappa\pro^{M-1}$,
contained in the Pl\"ucker embedding of the Grassmannian \G{N}{N-p(m_0)}.   

We construct the tropicalization of the Hilbert scheme using this embedding. As usual we denote by $\tau\left(\hilb{p}{n} \right) \subset \pro\left(\Lambda^{M}\right)$ the image of the Hilbert scheme via the tropicalization map, and by $\tropicalization\left(\hilb{p}{n} \right)$ its closure in $\trop\pro^{M-1}$. Of course, if $\Lambda_v = \erre$, we have $\tau\left(\hilb{p}{n} \right) = \tropicalization\left(\hilb{p}{n} \right)$.

For every point $x \in \hilb{p}{n}$, denote by $V_x \subset \cappa\pro^n$ the algebraic subscheme parametrized by $x$. 

\begin{thm}
Let $x,y \in \hilb{p}{n} \subset \cappa\pro^{M-1}$. If $\tau(x) = \tau(y)$, then $\tropicalization(V_x) = \tropicalization(V_y)$.
\end{thm}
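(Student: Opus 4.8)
The plan is to push everything down to the single graded piece of degree $m_0$, where Theorem~\ref{grado_m0} applies, and then to observe that the tropicalization of that piece depends only on its tropical Pl\"ucker coordinates, which are precisely the coordinates of the point $\tau(x)$. (Morally this is the statement that a tropical linear space is determined by its tropical Pl\"ucker vector.)

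First I would name the objects: let $I_x,I_y\subseteq\cappa[x_0,\dots,x_n]$ be the saturated homogeneous ideals with $V(I_x)=V_x$ and $V(I_y)=V_y$; both have Hilbert polynomial $g(m_0,\dots,m_s;x)$, and $\tropicalization(V_\bullet)$ is the image in $\trop\pro^n$ of the cone $\tropicalization(Z(I_\bullet))\subseteq\trop^{n+1}$, so it suffices to prove $\tropicalization(Z(I_x))=\tropicalization(Z(I_y))$. By Theorem~\ref{grado_m0} each side equals $\bigcap_{f\in(I_\bullet)_{m_0}}T(\tau(f))$, a $\Lambda_v$-rational polyhedral set, hence the closure of its intersection with $\Lambda_v^{n+1}$; thus it is enough to show that, for $\omega\in\Lambda_v^{n+1}$, membership of $\omega$ in $\tau(Z(I_x))$ depends only on $\omega$ and $\tau(x)$. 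By the description of tropical varieties through initial ideals, $\omega\notin\tau(Z(I_x))$ iff $\ini_\omega(I_x)$ contains a monomial; by the last proposition before Theorem~\ref{grado_m0} (and multiplying by a suitable power of $x_0$) this holds iff $\ini_\omega(I_x)$ contains a monomial of degree $m_0$, that is, iff the $\Delta$-subspace $\ini_\omega\big((I_x)_{m_0}\big)\subseteq\Delta^N$ --- the span of the initial forms of the elements of the $\cappa$-subspace $(I_x)_{m_0}\subseteq\cappa^N$, of dimension $N-p(m_0)$ by Lemma~\ref{lemma:dimension} --- contains a coordinate vector $e_a$, i.e. the class of a monomial $x^a$ of degree $m_0$.

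Next I would compute the Pl\"ucker vector of such an initial subspace. For a $d$-dimensional subspace $L\subseteq\cappa^N$, with coordinates indexed by degree-$m_0$ exponents $a$, the substitution $x_i\mapsto t^{-\omega_i}x_i$ is a diagonal map $D_\omega$, so $L':=D_\omega L$ satisfies $P_S(L')=t^{-\sigma_S}P_S(L)$ with $\sigma_S=\sum_{a\in S}\langle a,\omega\rangle$, where $P_S$ denotes the $S$-th Pl\"ucker coordinate; and $\ini_\omega(L)$ is the image $\pi(L'\cap\ocors^{N})$ (with $\pi$ as in Lemma~\ref{lemma:dimension}), which admits a $\cappa$-basis contained in $\ocors^N$ that spans $L'$ and whose reduction mod $\mcors$ is a $\Delta$-basis of $\ini_\omega(L)$. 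Reading off Pl\"ucker coordinates from this basis shows $P_S(\ini_\omega(L))\neq 0$ exactly when $v(P_S(L'))$ is minimal over $S$, i.e. exactly when $\sigma_S+\tau(P_S(L))$ is maximal over $S$; hence the support $\{S:P_S(\ini_\omega(L))\neq 0\}$ depends only on $\omega$ and on the tropical Pl\"ucker vector $\big(\tau(P_S(L))\big)_S$. Since a $d$-dimensional $W\subseteq\Delta^N$ contains $e_a$ iff every maximal minor of $W$ omitting column $a$ vanishes --- i.e. iff its Pl\"ucker support is contained in $\{S:a\in S\}$ --- we get, writing $P_S$ for the Pl\"ucker coordinates of $(I_x)_{m_0}$ (so $(P_S)_S$ represents $x$ by the construction of the embedding in subsection~\ref{subsez:immersion}): $\omega\notin\tau(Z(I_x))$ iff there is an index $a$ with $\{S:\sigma_S+\tau(P_S)\text{ maximal over }S\}\subseteq\{S:a\in S\}$. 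A common tropical rescaling of $(P_S)_S$ shifts all the numbers $\sigma_S+\tau(P_S)$ by the same constant and changes neither the support nor which of them are maximal, so this is a condition on $\omega$ and $\tau(x)$ alone. As $\tau(x)=\tau(y)$, it follows that $\tau(Z(I_x))\cap\Lambda_v^{n+1}=\tau(Z(I_y))\cap\Lambda_v^{n+1}$; taking closures and images in $\trop\pro^n$ gives $\tropicalization(V_x)=\tropicalization(V_y)$.

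The step I expect to be delicate is the third one, where the Pl\"ucker support of $\ini_\omega(L)$ is identified with the locus where $\sigma_S+\tau(P_S(L))$ is maximal. Two points of bookkeeping need care: that the degree-$m_0$ component of $\ini_\omega(I_x)$ is indeed the initial subspace of $(I_x)_{m_0}$ (so that Lemma~\ref{lemma:dimension} and the last proposition before Theorem~\ref{grado_m0} apply in that degree), and that a basis of $L'$ supplied by the argument of Lemma~\ref{lemma:dimension} simultaneously lies in $\ocors^N$, spans $L'$ over $\cappa$, and reduces to a basis of $\ini_\omega(L)$ --- so that the tuple of its maximal minors is at once a representative of the Pl\"ucker vector of $L'$ and, after reduction mod $\mcors$, the Pl\"ucker vector of $\ini_\omega(L)$. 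Granting these, the remainder is linear algebra over $\ocors/\mcors$.
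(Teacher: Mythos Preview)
Your argument is correct and follows the same overall strategy as the paper: reduce to the degree-$m_0$ piece via Theorem~\ref{grado_m0}, and then use that the relevant tropical information about the linear subspace $(I_x)_{m_0}\subset S_{m_0}$ is determined by its tropical Pl\"ucker vector, which is $\tau(x)$. The difference is in how that second step is handled. The paper simply invokes \cite[Thm.~3.8]{SS}: equal tropical Pl\"ucker vectors imply equal tropicalized linear spaces, hence $\{\tau(f):f\in (I_x)_{m_0}\}=\{\tau(f):f\in (I_y)_{m_0}\}$ as sets of tropical polynomials, and the intersection formula in Theorem~\ref{grado_m0} finishes immediately. You instead re-derive what is needed from scratch, computing the Pl\"ucker support of the initial subspace $\ini_\omega\big((I_x)_{m_0}\big)$ and characterizing ``contains a coordinate vector $e_a$'' by that support. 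This buys self-containment (no appeal to \cite{SS}) at the cost of length; the paper's route is a two-line citation. The bookkeeping points you flag are fine: $(\ini_\omega I)_{m_0}=\ini_\omega(I_{m_0})$ follows from the proof of the homogeneity proposition, and the basis furnished by Lemma~\ref{lemma:dimension} does exactly what you need for the minor/valuation comparison.
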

\begin{proof}
Let $I$ and $J$ be the ideals corresponding, respectively, to $V_x$ and $V_y$. The homogeneous parts $I_{m_0}, J_{m_0}$, considered as vector subspaces of $S_{m_0}$, corresponds to points of the Grassmannian \G{N}{N-p(m_0)}. By \cite[Thm. 3.8]{SS} if two points of the Grassmannian have the same tropicalization, the linear spaces the parametrize have the same tropicalization. As we know that $\tau(x) = \tau(y)$, we have that $\tropicalization(I_{m_0}) = \tropicalization(J_{m_0})$. More explicitly, this means that
$$\{\tau(f) \ |\ f \in I_{m_0}\} = \{\tau(f) \ |\ f \in J_{m_0}\} $$
By theorem \ref{grado_m0}, this implies that $\tropicalization(V_x) = \tropicalization(V_y)$.
\end{proof}

We can sum up what we said so far in the following statement showing that the tropicalization of the Hilbert scheme can be interpreted as a parameter space for the set of tropical varieties that are the tropicalization of subschemes with a fixed Hilbert polynomial. 

\begin{thm}
\label{teo:teorema_principale}
There is a commutative diagram
\[
\begin{tikzpicture}[node distance=6cm]
 \node (G) {\hilb{p}{n}};
 \node[right of=G] (pl) {$\left\{ V \subset \cappa\pro^n \ |\ V \mbox{ has Hilbert polynomial } p \right\}$};
  \draw[<->] (G)-- node[auto] {$b$} (pl);
 \node[below=1cm of G] (TG) {$\tau(\hilb{p}{n})$};
 \node[right of=TG] (Tpl) {$\left\{ \tropicalization(V) \subset \trop\pro^n \ |\ V \mbox{ has Hilbert polynomial } p \right\}$};
  \draw[->]  (G)   -- node[auto]{$\tau$} (TG);
  \draw[->>] (TG)  -- node[auto]{$s$}                   (Tpl);
  \draw[->]  (pl)  -- node[auto]{$\tropicalization$} (Tpl);
\end{tikzpicture}
\]
Where $b$ is the classical correspondence between points of the Hilbert scheme and subschemes of $\cappa\pro^n$, and $s$ is defined as
\[s(x) = \bigcap_{f\in L(x)}T(f)\]
where $L(x)$ is the tropical linear subspace of the space of tropical polynomials associated to the point $x$, seen as a point of the tropical Grassmannian.
Moreover, the map $s$ is surjective.
\end{thm}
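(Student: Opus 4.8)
The plan is to deduce the theorem from the two results already in hand --- Theorem~\ref{grado_m0}, which bounds the degree of a tropical basis by $m_0$, and the preceding theorem, which says that $\tau(x)=\tau(y)$ forces $\tropicalization(V_x)=\tropicalization(V_y)$ --- together with the Speyer--Sturmfels description of the tropical Grassmannian. There are really three things to check: that $s$ is well defined on $\tau(\hilb{p}{n})$ and lands in the asserted target, that the square commutes (i.e. $s(\tau(x))=\tropicalization(V_x)$ for every $x$), and that $s$ is surjective.

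First I would make the formula for $s$ explicit. A point $x\in\hilb{p}{n}\subset\cappa\pro^{M-1}$, read through the embedding of subsection~\ref{subsez:immersion}, has Pl\"ucker coordinates equal to those of the subspace $I_{m_0}\subseteq S_{m_0}\cong\cappa^N$, where $I$ is the saturated ideal of $V_x$; so $\tau(x)$ is a point of the tropical Grassmannian $\tropicalization(\G{N}{N-p(m_0)})$, and by \cite[Thm.~3.8]{SS} the tropical linear space $L(x)$ attached to it is the tropicalization of the linear space $I_{m_0}$, namely the closure of $\{\tau(f)\ |\ f\in I_{m_0}\}$ inside the space $\trop^N$ of degree-$m_0$ tropical forms. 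A short limit argument on the dense set of $\Lambda_v$-rational points --- of the same flavour as the last step of the preceding theorem --- shows that passing to this closure does not alter the intersection of the corresponding tropical hypersurfaces, so that
\[
\bigcap_{f\in L(x)}T(f)\ =\ \bigcap_{f\in I_{m_0}}T(\tau(f)).
\]
By Theorem~\ref{grado_m0} the right-hand side is $\tropicalization(Z(I))$, whose image in $\trop\pro^n$ is $\tropicalization(V_x)$. This gives simultaneously the commutativity of the square (since $b(x)=V_x$) and the fact that $s$ takes values in $\{\tropicalization(V)\ |\ V\text{ has Hilbert polynomial }p\}$.

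Then well-definedness of $s$ on $\tau(\hilb{p}{n})$ is immediate, because $L(x)$ depends only on the underlying point $\tau(x)$ of the tropical Grassmannian; equivalently, it is exactly the content of the preceding theorem. For surjectivity I would take any $\tropicalization(V)$ in the target, so $V\subseteq\cappa\pro^n$ is a closed subscheme with Hilbert polynomial $p$; by the moduli property of $\hilb{p}{n}$ there is a point $x$ with $V_x=V$, and then $\tau(x)\in\tau(\hilb{p}{n})$ with $s(\tau(x))=\tropicalization(V_x)=\tropicalization(V)$ by the commutativity just obtained. Hence $s$ is onto.

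Given Theorem~\ref{grado_m0} and the preceding theorem the logical content is light, and the main obstacle is purely one of matching two descriptions: the intrinsically defined tropical linear space $L(x)$ of \cite{SS} against the coordinate set $\{\tau(f)\ |\ f\in I_{m_0}\}$ in terms of which Theorem~\ref{grado_m0} is stated, together with the accompanying verification that replacing this set by its closure leaves the intersection $\bigcap T(\cdot)$ unchanged. Once that bridge is set up, commutativity, well-definedness, and surjectivity all follow formally.
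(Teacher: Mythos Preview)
Your proposal is correct and matches the paper's approach: the paper presents Theorem~\ref{teo:teorema_principale} explicitly as a summary of what precedes it (``We can sum up what we said so far\ldots''), with no separate proof, so the logical content is exactly the assembly of Theorem~\ref{grado_m0}, the preceding theorem, and \cite[Thm.~3.8]{SS} that you describe. The only thing you add beyond the paper is the explicit closure argument for $L(x)$, which the paper leaves implicit.
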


This parametrization is surjective, but it is in general not injective, as we will see in section \ref{sez:examples}. This result extends to the Hilbert schemes a property that was already known for the tropical Grassmannian (see \cite[Thm. 3.8]{SS}).

\subsection{Dependence on the valued field}   \label{subsez:dependence}

The definition of tropical variety, as it was given in subsection \ref{subsez:tropicalization}, depends on the choice of a valued field $\cappa$. For an example of this, see \cite[Thm. 7.2]{SS}, where it is exhibited a tropical linear space defined using a field of characteristic $2$ that cannot be obtained as tropicalization of a linear space over a field of characteristic $0$. 

Using the construction of transfinite Puiseaux series, one can construct larger and larger valued fields, and it is \emph{a priori} possible to think that if the field is sufficiently large, new tropical varieties may appear. As a corollary of our result about the Hilbert scheme, we show that that is not the case: the definition of tropical variety only depends on the characteristic pair of the valued field, and on the image group $\Lambda_v$. If we restrict our attention only to the largest possible image group, $\erre$, we have that the definition of tropical variety only depends on the characteristic pair of the field.

\begin{thm}
Let, $(\cappa,v), (\cappa',v')$ be two algebraically closed fields, with real valued valuations $v:\cappa^* \mapsto \erre$, $v':{\cappa'}^* \mapsto \erre$. Suppose that $\cp(\cappa) = \cp(\cappa')$, and $\Lambda_v \subset \Lambda_{v'}$. Let $V \subset \cappa\pro^n$ be a subscheme. Then there exists a subscheme $W \subset \cappa'\pro^n$ with the same Hilbert polynomial as $V$ and such that $\tropicalization(V) = \tropicalization(W)$. 
\end{thm}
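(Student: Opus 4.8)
\emph{Strategy and reduction.} The plan is to route everything through the Hilbert scheme: since $H:=\hilb{p}{n}$ is defined over $\ze$, Proposition~\ref{prop:extension} makes its tropicalization insensitive to the base field, and the field-independence of the parametrization $s$ of Theorem~\ref{teo:teorema_principale} then lets us transport $\tropicalization(V)$ from $\cappa$ to $\cappa'$. In detail, let $p=g(m_0,\dots,m_s;x)$ be the Hilbert polynomial of $V$ and let $x\in H(\cappa)\subset\cappa\pro^{M-1}$ be the point corresponding to $V$; assume for the moment that $x$ lies in the torus, so that $\omega:=\tau(x)\in\pro(\Lambda_v^{M})$ is defined. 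By Theorem~\ref{teo:teorema_principale}, $\tropicalization(V)=s(\omega)$, and $s(\omega)$ is computed from $\omega$ alone by a purely combinatorial recipe: viewing $\omega$ as a point of the tropical Grassmannian one forms the tropical linear space $L(\omega)$ of tropical polynomials and intersects the associated tropical hypersurfaces. No element of $\cappa$ enters, so $s$ is the same map over every valued field (here one uses \cite[Thm.~3.8]{SS} together with Theorem~\ref{grado_m0}). It is therefore enough to produce a torus point $y\in H(\cappa')$ with $\tau(y)=\omega$: then $W:=V_y\subset\cappa'\pro^n$ automatically has Hilbert polynomial $p$, and $\tropicalization(W)=s(\tau(y))=s(\omega)=\tropicalization(V)$.

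\emph{The tropical Hilbert scheme is the same over $\cappa$ and over $\cappa'$.} In its Plücker embedding $H$ is cut out by polynomials with integer coefficients, so its homogeneous ideal $J$ is defined over $\ze$. Since $\cp(\cappa)=\cp(\cappa')$, Proposition~\ref{prop:prime field} gives algebraically closed subfields $\effe\subset\cappa$ and $\effe'\subset\cappa'$, both isomorphic as valued fields to the same one of $\effe_{(0,0)},\effe_{(p,p)},\effe_{(0,p)}$ (the one determined by the common characteristic pair), and each containing an element of valuation $1$; in particular there is a valued-field isomorphism $\effe\cong\effe'$, and it fixes $\ze$. The hypotheses of Proposition~\ref{prop:extension} thus hold for $J$ over $\effe$ and over $\effe'$, and that proposition together with the isomorphism $\effe\cong\effe'$ gives an equality of subsets of the fixed space $\trop\pro^{M-1}$:
\[\tropicalization_\cappa(H)=\tropicalization_\effe(H)=\tropicalization_{\effe'}(H)=\tropicalization_{\cappa'}(H)=:\Sigma.\]
In particular $\omega\in\tropicalization_\cappa(H)=\Sigma$, and as $\Lambda_v\subseteq\Lambda_{v'}$ we get $\omega\in\Sigma\cap\pro(\Lambda_{v'}^{M})$.

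\emph{Realizing $\omega$ over $\cappa'$.} It remains to see that $\Sigma\cap\pro(\Lambda_{v'}^{M})\subseteq\tau_{\cappa'}(H)$, i.e.\ that every $\Lambda_{v'}$-rational point of the tropical Hilbert scheme is realized over $\cappa'$. Granting this, pick a torus point $y\in H(\cappa')$ with $\tau(y)=\omega$ and conclude as in the first paragraph; the last sentence of the theorem then follows by taking $\Lambda_{v'}=\erre$, for then $\Sigma$ contains no further points. To prove the inclusion, work with the affine cone over $H$, whose homogeneous ideal is $J$: given a lift $\tilde\omega\in\Lambda_{v'}^{M}$ of a point of $\Sigma$, suppose $\ini_{\tilde\omega}(J)$ contained a monomial $x^a$. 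Since $\ini_{\tilde\omega}(J)$ is exactly the set of initial forms of elements of $J$, there would be $f\in J$ with $\ini_{\tilde\omega}(f)=x^a$, whence the maximum defining $\tau(f)(\tilde\omega)$ is attained only once and $\tilde\omega\notin T(\tau(f))$; but the fundamental theorem of tropical geometry gives $\tilde\omega\in\tropicalization(Z(J))\subseteq T(\tau(f))$, a contradiction. Hence $\ini_{\tilde\omega}(J)$ is monomial-free, $\tilde\omega$ lies in the image of the tropicalization map, and $\omega\in\tau_{\cappa'}(H)$.

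\emph{Main obstacle.} I do not expect a single deep step; the care is concentrated in two places. First, one must extract the field-independence of $s$ from its explicit description together with \cite[Thm.~3.8]{SS}. Second, and more delicate: if the Hilbert point $x$ of $V$ does not lie in the torus, then $\tau(x)$ is undefined and the reduction in the first paragraph collapses. For such $V$ one needs an additional argument before the machinery above applies --- for instance, approximating $\tropicalization(V)$ by tropicalizations attached to nearby torus points of $H$ and using a density/closedness argument, or replacing $I$ by a tropicalization-preserving perturbation whose Hilbert point is torus-generic.
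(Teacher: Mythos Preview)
Your approach is essentially identical to the paper's: route through the Hilbert scheme, use that it is defined over $\ze$ together with Propositions~\ref{prop:prime field} and~\ref{prop:extension} to show $\tropicalization(\hilb{p}{n})$ is the same over both fields, lift $\tau(x)$ to a $\cappa'$-point $y$ using $\Lambda_v\subset\Lambda_{v'}$, and invoke Theorem~\ref{teo:teorema_principale}. You supply more detail than the paper at two places---the explicit initial-ideal argument for lifting $\Lambda_{v'}$-rational tropical points to actual $\cappa'$-points, and the remark that the map $s$ is field-independent by its combinatorial description---both of which the paper leaves implicit.

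Your flagged ``main obstacle'' (the Hilbert point $x$ possibly lying on a coordinate hyperplane, so that $\tau(x)$ is undefined) is a genuine concern, but the paper's own proof does not address it either: it simply writes $\tau(x)\in\tropicalization(\hilb{p}{n})\cap\pro(\Lambda_v^{M})$ without comment and proceeds. So relative to the paper you have no gap; in absolute terms, both arguments tacitly restrict to subschemes whose Hilbert point lies in the big torus of $\pro^{M-1}$, and a fully general statement would require either that hypothesis or an additional argument of the perturbation/density type you sketch.
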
 
\begin{proof}
Let $p$ be the Hilbert polynomial of $V$. Consider the Hilbert scheme $\hilb{p}{n}$. We claim that the tropical variety $\tropicalization(\hilb{p}{n})$ is the same for the two fields $\cappa$ and $\cappa'$. This is because, by proposition \ref{prop:prime field} both fields contain a subfield $\effe$ isomorphic to one of the fields $\effe_{(0,0)}, \effe_{(p,p)}, \effe_{(0,p)}$. As we said in subsection \ref{subsez:immersion}, the Hilbert scheme is defined by equations with integer coefficients, equations that are not dependent on the field. By proposition \ref{prop:extension}, $\tropicalization(\hilb{p}{n})$ is equal if defined over $\effe$ or $\cappa$ or $\cappa'$. 

Now if $x \in \hilb{p}{n}$ (over $\cappa$) is the point corresponding to $V$, we have that $\tau(x) \in \tropicalization(\hilb{p}{n}) \cap \pro(\Lambda_v^{M})$. As $\Lambda_v \subset \Lambda_{v'}$, there is a point $y \in \hilb{p}{n}$ (over $\cappa'$) such that $\tau(y) = \tau(x)$. Now let $W \subset \cappa'\pro^n$ be the subscheme corresponding to $y$. By theorem \ref{teo:teorema_principale}, we have $\tropicalization(W) = \tropicalization(V)$.
\end{proof}

This theorem can be restated as follows:

\begin{thm}\label{teo:indipendenza}
The set of tropical varieties definable over an algebraically closed valued field $(\cappa,v)$ only depends on the characteristic pair of $(\cappa,v)$, and on the image group $\Lambda_v$. If you suppose $\Lambda_v = \erre$, the set of tropical varieties only depends on the characteristic pair of $(\cappa,v)$.
\end{thm}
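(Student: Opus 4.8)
The plan is to obtain this statement as an immediate consequence of the theorem stated just above it, which already compares two valued fields with $\Lambda_v\subseteq\Lambda_{v'}$ and the same characteristic pair. The key observation is simply that \emph{equality} of image groups supplies both inclusions $\Lambda_v\subseteq\Lambda_{v'}$ and $\Lambda_{v'}\subseteq\Lambda_v$, so that theorem can be applied twice, once in each direction, upgrading the one-sided comparison of tropical varieties to a genuine equality of the two sets.

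In detail, let $(\cappa,v)$ and $(\cappa',v')$ be algebraically closed valued fields with $\cp(\cappa,v)=\cp(\cappa',v')$ and $\Lambda_v=\Lambda_{v'}$. Given a tropical variety $\tropicalization(V)$ with $V\subset\cappa\pro^n$, the inclusion $\Lambda_v\subseteq\Lambda_{v'}$ lets us invoke the preceding theorem to produce a subscheme $W\subset\cappa'\pro^n$, with the same Hilbert polynomial as $V$, such that $\tropicalization(W)=\tropicalization(V)$; hence every tropical variety definable over $(\cappa,v)$ is also definable over $(\cappa',v')$. Exchanging the roles of the two fields and using the reverse inclusion $\Lambda_{v'}\subseteq\Lambda_v$ gives the opposite containment, so the two sets of tropical varieties coincide. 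This is exactly the assertion that the set of tropical varieties definable over an algebraically closed valued field is a function of the pair $\bigl(\cp(\cappa,v),\Lambda_v\bigr)$ alone. The second statement is then the special case in which the image group is fixed equal to $\erre$: among valuations surjective onto $\erre$ the condition $\Lambda_v=\Lambda_{v'}$ is automatic, so only the characteristic pair survives as a parameter.

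I expect no genuine obstacle here: all of the content sits in the preceding theorem, which in turn rests on Theorem~\ref{teo:teorema_principale} and on the degree bound of Theorem~\ref{grado_m0}. The only points that need care are bookkeeping ones — that equality of image groups is what makes the argument run symmetrically in both directions, and that ``tropical variety definable over $(\cappa,v)$'', understood as ranging over all $n$ and all subschemes of $\cappa\pro^n$, is precisely the family to which that theorem applies. It may also be worth remarking that the resulting parametrization is not vacuous: for the characteristic pairs $(0,0)$ and $(p,p)$ the transfinite Puiseux series fields $\Delta((t^\erre))$ realize the value group $\Lambda_v=\erre$, so these cases genuinely occur.
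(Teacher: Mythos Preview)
Your proposal is correct and matches the paper's approach exactly: the paper presents Theorem~\ref{teo:indipendenza} explicitly as a restatement of the preceding theorem, with no separate proof, and your argument simply spells out the obvious ``apply it twice, once in each direction'' step needed to pass from the one-sided inclusion $\Lambda_v\subseteq\Lambda_{v'}$ to equality of the two sets of tropical varieties.
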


\section{Existence of monomials of bounded degree}\label{sez:bound_monomials}

The aim of this section is to prove lemma \ref{lemma:monomi contenuti}, that was needed above.

\subsection{Arithmetic Degree}   \label{subsez:arit deg}

Let $I \subset S$ be an homogeneous ideal. A \nuovo{primary decomposition} of $I$ is a decomposition
$$I = \bigcap_{i = 1}^k Q_i$$
where the $Q_i$'s are homogeneous primary ideals, no $Q_i$ is contained in the intersection of the others, and the radicals of the $Q_i$'s are distinct. The radicals $P_i = \sqrt{Q_i}$ are called prime ideals \nuovo{associated} with $I$, and they don't depend on the choice of the decomposition (see \cite[p. 51, Thm. 4.5, Thm. 7.13]{AM}, \cite[Sect. 3.5]{Eis}).

For every $i \in \{1\dots k\}$ let 
$$I_i = \bigcap \{Q_j | P_j \subsetneq P_i \}$$
If $P_i$ is an isolated component, then $I_i = (1)$, while if $P_i$ is embedded, then $I_i \subset P_i$. Note that, even if the primary ideals $Q_j$ are not always univocally determined by $I$, the ideals $I_i$ and $I_i \cap Q_i$ are univocally determined, see \cite[Thm. 4.10]{AM}. The irrelevant ideal $(x_0, \dots, x_n)$ is associated with $I$ if and only if $I$ is not saturated. In this case, if $(x_0, \dots, x_n) = P_1$, then the saturation $I^{\sat}$ is equal to the ideal $I_1$.

Following \cite[p. 27]{BM92}, for every $i \in \{1\dots k\}$ we define the multiplicity of $P_i$ in $I$ (written $\mult_I(P_i)$) as the length $\ell$ of a maximal chain of ideals
$$Q_i \cap I_i = J_\ell \subsetneq J_{\ell-1} \subsetneq \dots \subsetneq J_1 = P_i \cap I_i$$
where every $J_j = R_j \cap I_i$ for some $P_i$-primary ideal $R_j$.

For every $r \in \{-1, \dots, n-1\}$, the \nuovo{arithmetic degree} of $I$ in dimension $r$ is
$$\adeg_r(I) = \sum_{\{i \ |\ \dim(P_i) = r\}}  \mult_I(P_i) \deg(P_i)$$
If $r > \dim(I) = \max \dim(P_i)$, then $\adeg_r(I)=0$. If $s = \dim(I)$, then $\adeg_s(I)=\deg(I)$. For every $0 \leq r \leq \dim(I)$ we have $\adeg_r(I) = \adeg_r(I^{\sat})$. The only prime ideal of dimension $-1$ is the irrelevant ideal, which has degree $1$. The arithmetic degree in dimension $-1$ indicates how much $I$ is non saturated: $\adeg_{-1}(I) = \dim_k(I^{\sat}/I)$.

The \nuovo{arithmetic degree} of $I$ is
$$ \adeg(I) = \sum_{i=0}^n \adeg_r(I) $$
Note that the value $\adeg_{-1}(I)$ does not appear in the arithmetic degree, in particular $\adeg(I) = \adeg(I^{\sat})$.

\begin{thm}
Let $I$ be an homogeneous ideal and let $g = g(m_0, \dots, m_s; x)$ be its  Hilbert polynomial. Then for every $i \in \{0,\dots,s\}$:
$$\sum_{r=i}^s \adeg_r(I) \leq m_i$$
In particular 
$$\adeg(I) \leq m_0$$ 
Moreover these bounds are optimal: For every Hilbert polynomial $g$, there exists an ideal $I$ with Hilbert polynomial $g$ and such that for all $i \in \{0,\dots,s\}$ we have $\sum_{r=i}^s \adeg_r(I) = m_i$.
\end{thm}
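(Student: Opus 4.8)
The plan is to reduce to the saturated case and then induct on the dimension $s$ by cutting with a generic hyperplane, using how both the Hilbert polynomial and the arithmetic degree behave under such a section. Since $\adeg_r(I)=\adeg_r(I^{\sat})$ for all $r\geq 0$ and $p_I=p_{I^{\sat}}$, there is no loss in assuming $I$ saturated. The base case $s=0$ is immediate: then $p_I=g(m_0;x)=m_0$ is constant and the only relevant term is $\adeg_0(I)=\deg(I)$, which equals $m_0$ because the leading coefficient of $g(m_0;x)$ is $m_0$; this is an equality, so it also settles optimality in dimension $0$ (the degenerate case $p_I=0$, i.e.\ $V(I)=\emptyset$, is vacuous since then there are no indices $i$). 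The combinatorial engine of the induction is the identity
\[
g(m_0,m_1,\dots,m_s;x)-g(m_0,m_1,\dots,m_s;x-1)=g(m_1,\dots,m_s;x),
\]
which one gets by applying $\binom{a}{b}-\binom{a-1}{b}=\binom{a-1}{b-1}$ to the defining formula for $g$ termwise and checking that the $i=0$ summand cancels.

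For the inductive step, choose a generic linear form $\ell$; since $I$ is saturated, $\ell$ lies outside every associated prime of $S/I$, hence is a nonzerodivisor, so the Hilbert function of $\bar I:=(I+(\ell))/(\ell)\subset S/(\ell)$ is the first difference of that of $I$. By the identity above, $\bar I$ has Hilbert polynomial $g(m_1,\dots,m_s;x)$ and dimension $s-1$, with parameters still weakly decreasing and positive, so the inductive hypothesis applies to it. The geometric core is the comparison of arithmetic degrees under a generic hyperplane section: for generic $\ell$ one has $\adeg_{r-1}(\bar I)\geq \adeg_r(I)$ for every $r\geq 1$, because each $r$-dimensional associated component of $I$ contributes a component of the same multiplicity and degree, of dimension $r-1$, to the section, while the section can only acquire further embedded components. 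This is exactly the type of statement proved by Sturmfels, Trung and Vogel (and implicit in the work of Bayer and Mumford), and I would cite it rather than reprove it. Summing over $r$ from $i$ to $s$ and using the inductive bound for $\bar I$ at index $i-1$ gives $\sum_{r=i}^{s}\adeg_r(I)\leq\sum_{r=i-1}^{s-1}\adeg_r(\bar I)\leq m_i$ for every $i\geq 1$.

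It remains to handle $i=0$, i.e.\ $\adeg(I)\leq m_0$, which the generic section does not see directly because it discards the $0$-dimensional part of $I$. I would bound $\adeg_0(I)$ separately, using that for generic $\ell$ the vanished $0$-dimensional components reappear as failure of saturation of the section: $\adeg_0(I)\leq\adeg_{-1}(\bar I)=\dim_k(\bar I^{\sat}/\bar I)$. One then bounds this non-saturation degree by $m_0-m_1$, using $\reg(\bar I^{\sat})\leq m_1$ and the fact that regularity does not increase under a generic section (so $\bar I$ and $\bar I^{\sat}$ already coincide in degrees $\geq m_0\geq\reg(I)$), comparing the two Hilbert functions only in the finitely many degrees below $m_0$ with the estimates collected in Subsection~\ref{subsez:Hilb poly}. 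Combining $\adeg_0(I)\leq m_0-m_1$ with the $i=1$ bound $\sum_{r\geq 1}\adeg_r(I)\leq m_1$ yields $\adeg(I)\leq m_0$. An alternative route for this entire argument, bypassing the hyperplane-section lemma, is to degenerate $I$ to a monomial (even Borel-fixed) initial ideal $\operatorname{in}_\prec(I)$: the partial sums $\sum_{r\geq i}\adeg_r$ do not decrease under such a degeneration, the Hilbert polynomial is unchanged, and for a Borel-fixed ideal the associated primes form a nested chain of coordinate subspaces, all of degree $1$, so the inequalities reduce to an explicit computation with its staircase.

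For optimality I would exhibit, for each admissible $(m_0\geq\cdots\geq m_s>0)$, the saturated lexicographic-segment ideal $L_p$ with Hilbert polynomial $p=g(m_0,\dots,m_s;x)$: its associated primes are the nested coordinate subspaces $(x_0,\dots,x_j)$, all of degree $1$, and reading the multiplicities off the lex staircase gives $\adeg_r(L_p)=m_r-m_{r+1}$ for $r<s$ and $\adeg_s(L_p)=m_s$ (with $m_{s+1}:=0$), hence $\sum_{r=i}^{s}\adeg_r(L_p)=m_i$ for all $i$. The main obstacle is genuinely the hyperplane-section lemma for the arithmetic degree together with the bookkeeping for the $0$-dimensional part in the case $i=0$; the identity for $g$ is a short computation and the optimality statement is a matter of tabulating the lex staircase.
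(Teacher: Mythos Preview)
The paper's proof is a pure citation: it identifies $\adeg_r(I)$ with Hartshorne's invariant $n_r$ and then invokes two results from \cite{Har66}: (i) for a tight fan $Y$ with Hilbert polynomial $g(m_0,\dots,m_s;x)$ one has $\sum_{r\geq i} n_r(Y)=m_i$ exactly, and (ii) any $X$ with this Hilbert polynomial can be compared to a tight fan $Y$ with the same Hilbert polynomial satisfying $n_r(X)\leq n_r(Y)$ for every $r$. Optimality is witnessed by the tight fan itself, which is essentially your lex-segment ideal.

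Your hyperplane-section induction is a genuinely different route. The difference identity for $g$ is correct, and for $i\geq 1$ the argument goes through cleanly once one accepts $\adeg_{r-1}(\bar I)\geq \adeg_r(I)$ for generic $\ell$ (this is indeed in \cite{STV95}). The gap is exactly where you flag it: for $i=0$ you need $\adeg_{-1}(\bar I)+\adeg(\bar I)\leq m_0$, and since induction only gives $\adeg(\bar I)\leq m_1$, you are reduced to $\adeg_{-1}(\bar I)\leq m_0-m_1$. Your sketch for this---``compare Hilbert functions below degree $m_0$ using regularity''---does not close. Knowing $\reg(\bar I^{\sat})\leq m_1$ and that $h_{\bar I}$ and $h_{\bar I^{\sat}}$ agree from degree $m_0$ onward bounds the \emph{range} of the sum $\sum_d\bigl(h_{\bar I}(d)-h_{\bar I^{\sat}}(d)\bigr)$ to $d<m_0$, but not its \emph{value}: nothing in Subsection~\ref{subsez:Hilb poly} supplies the required lower bound on $h_{\bar I^{\sat}}$ in low degrees, and there is no reason each summand should be at most $1$. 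Your alternative via Borel-fixed degeneration is sound and is in fact much closer in spirit to what the paper (through Hartshorne) actually does: Hartshorne's tight-fan comparison is precisely a passage to a special configuration for which both the Hilbert polynomial and all the $\adeg_r$ are explicitly computable, and the lex-segment ideal you propose for optimality defines exactly such a fan. If you want to keep the hyperplane-section proof self-contained, the cleanest fix is to run that same degeneration argument directly rather than trying to control $\adeg_{-1}(\bar I)$ by regularity.
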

\begin{proof}
This fact can be deduced by putting together a few results from \cite{Har66}.

For every $r\geq 0$, $\adeg_r(I) = n_r(X)$, where $X$ is the closed subscheme of $\p{n}$ defined by $I$ and $n_r(X)$ is defined in \cite[p. 21, Rem. 3]{Har66}. See \cite[p. 420]{STV95} for details. 

A tight fan is a particular kind of closed subscheme of $\p{n}$, see the definition at the beginning of \cite[Chap. 3]{Har66}. If $X$ is a tight fan with Hilbert polynomial $g$, then $\sum_{r=i}^s n_r(X) = m_i$ (see \cite[Cor. 3.3]{Har66}).  

Given any closed subscheme $X$ of $\p{n}$ with Hilbert polynomial $g$, it is possible to construct a tight fan $Y$ with Hilbert polynomial $g$ and $n_r(X) \leq n_r(Y)$ (as in the proof of \cite[Thm. 5.6]{Har66}). Hence $\sum_{r=i}^s n_r(X) \leq \sum_{r=i}^s n_r(Y) = m_i$.
\end{proof}

\subsection{Proof of lemma \ref{lemma:monomi contenuti} }

We will use the following standard notation. Let $I \subset S$ be an homogeneous ideal and $f \in S$. 

$$(I:f) = \{g \in S \ |\ gf \in I\}$$
$$(I:f^\infty) = \{g \in S \ |\ \exists n: g f^n \in I\}$$

See \cite[Ex. 1.12]{AM} for some properties of $(I:f)$, for example $(I:fg)=((I:f):g)$ and $(\bigcap_i I_i : f) = \bigcap_i (I_i : f)$. This same properties also hold for $(I:f^\infty)$: $(I:fg^\infty)=((I:f^\infty):g^\infty)$ and $(\bigcap_i I_i : f^\infty) = \bigcap_i (I_i : f^\infty)$. 

Let $I  = \bigcap_{i = 1}^k Q_i$ be a primary decomposition of $I$, as above, with $\sqrt{Q_i} = P_i$. Up to reordering, we can suppose that $f \in P_1, \dots, P_h$ and $f \not\in P_{h+1}, \dots, P_k$.

\begin{lemma}     \label{lemma:iterated division}
$$(I:f^\infty) = \bigcap_{i=h+1}^k Q_i$$
\end{lemma}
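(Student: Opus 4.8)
The plan is to prove the two inclusions separately, relying on the standard behaviour of the saturation-type colon operator with respect to intersections and on the primary property of each $Q_i$.

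First I would establish the easier inclusion, $\bigcap_{i=h+1}^k Q_i \subset (I:f^\infty)$. Pick $g \in \bigcap_{i=h+1}^k Q_i$. For each $i \in \{1,\dots,h\}$ we have $f \in P_i = \sqrt{Q_i}$, so there is an exponent $n_i$ with $f^{n_i} \in Q_i$; taking $n = \max\{n_1,\dots,n_h\}$ gives $f^n \in Q_i$ for all $i \le h$, hence $g f^n \in Q_i$ for those $i$. For $i > h$ we already have $g \in Q_i$, so $gf^n \in Q_i$ trivially. Therefore $gf^n \in \bigcap_{i=1}^k Q_i = I$, which shows $g \in (I:f^\infty)$.

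For the reverse inclusion $(I:f^\infty) \subset \bigcap_{i=h+1}^k Q_i$, I would use the two properties recalled just before the statement: $(\bigcap_i I_i : f^\infty) = \bigcap_i (I_i : f^\infty)$, so it suffices to compute $(Q_i : f^\infty)$ for each $i$. If $i \le h$, then $f \in P_i = \sqrt{Q_i}$, so some power of $f$ lies in $Q_i$, and then for any $g$ we have $g f^n \in Q_i$ for $n$ large, giving $(Q_i : f^\infty) = S$. If $i > h$, then $f \notin P_i$; I claim $(Q_i : f^\infty) = Q_i$. Indeed, suppose $gf^n \in Q_i$ with $g \notin Q_i$. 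Since $Q_i$ is primary and $gf^n \in Q_i$ while $g \notin Q_i$, we get $f^n \in \sqrt{Q_i} = P_i$, hence $f \in P_i$, a contradiction. Thus $(I:f^\infty) = \bigcap_{i \le h} S \cap \bigcap_{i>h} Q_i = \bigcap_{i=h+1}^k Q_i$, as desired.

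I do not expect a serious obstacle here: the argument is the standard localization/primary-decomposition computation, and the main point to be careful about is simply the bookkeeping of which $P_i$ contain $f$ and invoking the defining property of a primary ideal in the correct direction (from $gf^n \in Q_i$ and $g \notin Q_i$ conclude $f^n \in \sqrt{Q_i}$). One minor subtlety worth noting in passing is that the primary decomposition of $I$ need not be unique, but since $(I:f^\infty)$ is defined intrinsically and the conclusion is phrased in terms of one fixed chosen decomposition, this causes no trouble.
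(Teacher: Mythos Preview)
Your proof is correct and follows essentially the same approach as the paper: distribute the saturation $(\,\cdot:f^\infty)$ across the intersection $I=\bigcap Q_i$ and compute each $(Q_i:f^\infty)$ according to whether $f\in P_i$ or not. The only differences are cosmetic: the paper does this in one stroke (citing Atiyah--Macdonald, Lemma~4.4, for $(Q_i:f)=Q_i$ when $f\notin P_i$) rather than separating the two inclusions, so your first paragraph is in fact redundant once you have written the second.
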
   
\begin{proof}
$$(I : f^\infty) = (\bigcap_{i=1}^k Q_i : f^\infty) = \bigcap_{i=1}^k ( Q_i : f^\infty) $$
If $i \leq h$, we have $f \in P_i$, hence $f^m \in Q_i$ for some $m$, hence $(Q_i:f^m) = (1) = ( Q_i : f^\infty)$.

If $i > h$, we have $f \not\in P_i$, hence $(Q_j : f) = Q_j$ by \cite[Lemma 4.4]{AM}, and $(Q_j : f^\infty) = Q_j$.   
\end{proof}

We will need a way for estimating multiplicities, and to do that we need to construct some strictly ascending chains. 

\begin{lemma}  \label{lemma:strict chains}
Consider ideals $J,Q \subset S$ and elements $f \in S$ and $l \in \enne$. The following statements are equivalent:
\begin{enumerate}
\item There exists $a \in J$ such that $af^l \in Q$ and $af^{l-1} \not\in Q$.
\item The chain 
$$Q \cap J \subset (Q:f) \cap J \subset \dots \subset (Q:f^l) \cap J$$
is strictly ascendant.
\item $(Q:f^{l-1}) \cap J \subsetneq (Q:f^l) \cap J$.
\end{enumerate}
\end{lemma}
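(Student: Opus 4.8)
The statement is a chain of three equivalences relating a single "witness element" $a$, a strictly ascending chain of colon ideals intersected with $J$, and a single strict inequality in that chain. The cleanest route is to prove (1)$\Rightarrow$(2)$\Rightarrow$(3)$\Rightarrow$(1), so that all three become equivalent. First I would record the elementary monotonicity fact that $(Q:f^{j})\subseteq (Q:f^{j+1})$ for every $j$, so the displayed chain in (2) is ascending regardless; the only content is which inclusions are strict. I would also note $g\in (Q:f^{j})\cap J$ iff $g\in J$ and $gf^{j}\in Q$, which is the dictionary translating membership statements into the divisibility-by-$f$ language used throughout.

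For (1)$\Rightarrow$(2): given $a\in J$ with $af^{l}\in Q$, $af^{l-1}\notin Q$, I would show that for each $j\in\{1,\dots,l\}$ the element $af^{l-j}$ lies in $(Q:f^{j})\cap J$ but not in $(Q:f^{j-1})\cap J$. Indeed $af^{l-j}\in J$ since $a\in J$ and $J$ is an ideal; $(af^{l-j})f^{j}=af^{l}\in Q$ gives membership in $(Q:f^{j})$; and $(af^{l-j})f^{j-1}=af^{l-1}\notin Q$ gives non-membership in $(Q:f^{j-1})$. Hence each step $(Q:f^{j-1})\cap J\subsetneq (Q:f^{j})\cap J$ is strict, which is exactly (2).

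The implication (2)$\Rightarrow$(3) is immediate: (3) is just the last strict inclusion of the chain in (2). The remaining implication (3)$\Rightarrow$(1) is the substantive one and the only place a small argument is needed: from $(Q:f^{l-1})\cap J\subsetneq (Q:f^{l})\cap J$ pick $a$ in the larger set but not the smaller. Then $a\in J$ and $af^{l}\in Q$ by definition of $(Q:f^{l})$; and $af^{l-1}\notin Q$, for otherwise $a$ would lie in $(Q:f^{l-1})$, and being already in $J$ it would lie in $(Q:f^{l-1})\cap J$, contradicting the choice of $a$. This yields the witness required in (1), closing the cycle.

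I do not anticipate a genuine obstacle here; the lemma is essentially bookkeeping with colon ideals. The one point deserving care is making sure the chain in (2) is read correctly — that "strictly ascendant" means \emph{every} consecutive inclusion is strict, not merely that the endpoints differ — and checking the degenerate cases $l=1$ (where (2) and (3) coincide and (1) reads "$a\in J$, $af\in Q$, $a\notin Q$") and the case $f$ a zerodivisor or nilpotent modulo $Q$, which cause no trouble since no cancellation is ever used. This lemma will then be applied with $Q$ a $P_i$-primary ideal and $J=I_i$ to build the strictly ascending chains witnessing lower bounds on $\mult_I(P_i)$ in the proof of Lemma \ref{lemma:monomi contenuti}.
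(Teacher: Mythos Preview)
Your proof is correct and follows exactly the same cycle (1)$\Rightarrow$(2)$\Rightarrow$(3)$\Rightarrow$(1) as the paper, using the same witness $af^{l-j}$ for each strict inclusion in (2) and the same choice of $a\in(Q:f^l)\cap J\setminus(Q:f^{l-1})\cap J$ for (3)$\Rightarrow$(1). The only difference is that you spell out a bit more detail (the monotonicity of colon ideals, the degenerate cases), but the argument is identical.
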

\begin{proof}
$(1) \Rightarrow (2)$: For all $i \in \{1,\dots,l\}$, $af^{l-i} \in (Q:f^i) \cap J$, but $af^{l-i} \not\in (Q:f^{i-1}) \cap J$.

$(2) \Rightarrow (3)$: Trivial. 

$(3) \Rightarrow (1)$: Let $a$ be any element of $(Q:f^l) \cap J \setminus (Q:f^{l-1}) \cap J$. Then $a \in J$, $af^l \in Q$, but $af^{l-1} \not\in Q$.
\end{proof}

We will denote by $\ell_f(I)$ the minimum $l \in \enne$ such that $(I:f^l) = (I:f^{l+1})$. Some $l$ with this property always exist, because the ascending chain of ideals $I \subset (I:f) \subset (I:f^2) \subset \dots$ is stationary. We consider $f^0 = 1$, hence $\ell_f(I) = 0$ if and only if $I = (I:f)$. Note that if $l \geq \ell_f(I)$, then $(I:f^l) = (I:f^\infty)$.

Clearly, if $J_i$ are ideals such that $I = \bigcap J_i$, then $\ell_f(I) \leq \sup \ell_f(J_i)$.

\begin{lemma}    \label{lemma:mainlemma}
Let $I$ be an ideal with primary decomposition $I = \bigcap_{i = 1}^k Q_i$, with $f \in P_1, \dots, P_h$ and $f \not\in P_{h+1}, \dots, P_k$. Then
$$\ell_f(I) \leq \sum_{i=0}^h \mult_I(P_i)$$
\end{lemma}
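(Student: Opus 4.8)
The plan is to reduce the bound on $\ell_f(I)$ to a bound on the length of a single strictly ascending chain of ideals of the form $(Q_i : f^j) \cap I_i$, and then to identify that length with (a sum of) the multiplicities $\mult_I(P_i)$ as defined via the chains in subsection \ref{subsez:arit deg}. First I would use the observation recorded just above the statement: since $I = \bigcap_{i=1}^k Q_i$, we have $\ell_f(I) \le \sup_i \ell_f(Q_i)$, and for $i > h$ we have $f \notin P_i$, so by \cite[Lemma 4.4]{AM} $(Q_i : f) = Q_i$ and hence $\ell_f(Q_i) = 0$. Thus $\ell_f(I) \le \max_{1 \le i \le h} \ell_f(Q_i)$, and it suffices to show $\ell_f(Q_i) \le \mult_I(P_i)$ for each $i \le h$ — indeed summing (or just taking the max, which is dominated by the sum since the $\mult_I(P_i)$ are positive) then gives the claimed inequality.

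Next I would fix $i \le h$ and set $l = \ell_f(Q_i)$, so that $(Q_i : f^{l-1}) \subsetneq (Q_i : f^l)$. Intersecting with $I_i$ I would like to keep the inclusion strict: by Lemma \ref{lemma:strict chains} (the equivalence $(3) \Leftrightarrow (1)$ applied with $J = I_i$, $Q = Q_i$), strictness of $(Q_i : f^{l-1}) \cap I_i \subsetneq (Q_i : f^l) \cap I_i$ is equivalent to the existence of $a \in I_i$ with $af^l \in Q_i$, $af^{l-1} \notin Q_i$. To produce such an $a$ I would start from some $b$ with $bf^l \in Q_i$, $bf^{l-1} \notin Q_i$ (which exists because $l = \ell_f(Q_i)$), and multiply by a suitable element of $I_i$ that is a nonzerodivisor modulo $Q_i$; since $\sqrt{Q_i} = P_i$ and $I_i \not\subseteq P_i$ is impossible only when $P_i$ is embedded — here precisely $I_i \subseteq P_i$ when $P_i$ is embedded, so this is the delicate point — I would instead argue via localizing at $P_i$, where $(Q_i)_{P_i}$ is $P_i$-primary and $I_i$ becomes the whole ring (as $I_i$ is an intersection of $Q_j$ with $P_j \subsetneq P_i$, none of which is contained in $P_i$, so $(I_i)_{P_i} = S_{P_i}$). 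After this reduction the chain
\[
Q_i \cap I_i \;\subsetneq\; (Q_i : f) \cap I_i \;\subsetneq\; \cdots \;\subsetneq\; (Q_i : f^l) \cap I_i
\]
is strictly ascending of length $l$, by the equivalence $(1) \Leftrightarrow (2)$ in Lemma \ref{lemma:strict chains}.

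Finally I would match this chain against the definition of $\mult_I(P_i)$: each term $(Q_i : f^j) \cap I_i$ has the form $R_j \cap I_i$ with $R_j = (Q_i : f^j)$ a $P_i$-primary ideal (for $i \le h$, $f \in P_i$ forces $(Q_i : f^j)$ to have radical $P_i$, as long as it is a proper ideal, which holds for $j \le l$ since $(Q_i : f^l) = (Q_i : f^\infty) \subsetneq S$), and the chain runs from $Q_i \cap I_i$ up to $(Q_i : f^l) \cap I_i = (Q_i : f^\infty) \cap I_i$. Since $Q_i \cap I_i$ is the bottom of the chains defining $\mult_I(P_i)$ and every maximal such chain has length $\mult_I(P_i)$, our chain of length $l$ (which can be refined to a maximal one with top still a $P_i$-primary-times-$I_i$ ideal) gives $l \le \mult_I(P_i)$, i.e. $\ell_f(Q_i) \le \mult_I(P_i)$. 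Summing over $i = 1, \dots, h$ (and noting $\mult_I(P_0)$, if we index so that $P_0$ could be the irrelevant prime, only adds to the bound) yields $\ell_f(I) \le \sum_{i=0}^h \mult_I(P_i)$. The main obstacle I anticipate is the localization step needed to guarantee that intersecting with $I_i$ preserves strictness of the chain when $P_i$ is embedded — one must check carefully that passing to $S_{P_i}$ does not collapse the chain and that the resulting local chain can be transported back, which is where the precise definition of $I_i$ as $\bigcap\{Q_j : P_j \subsetneq P_i\}$ and the uniqueness statement \cite[Thm. 4.10]{AM} for $Q_i \cap I_i$ are used.
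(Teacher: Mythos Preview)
Your reduction to bounding each $\ell_f(Q_i)$ by $\mult_I(P_i)$ individually does not work: that componentwise inequality is false when $P_i$ is embedded. A concrete counterexample in $k[x,y]$: take $Q_2 = (x^2)$, $P_2 = (x)$, $Q_1 = (x^n, y)$, $P_1 = (x,y)$, and $f = x$. Here $I_1 = Q_2 = (x^2)$, and $\ell_x(Q_1) = n$ since $(Q_1:x^j) = (x^{n-j}, y)$ for $j < n$ while $(Q_1:x^n) = (1)$. On the other hand, for any $(x,y)$-primary $R$ with $(x^n, x^2 y) \subset R \cap (x^2)$ one checks that $R \cap (x^2)$ is one of the $n-1$ ideals $(x^m, x^2 y)$, $2 \le m \le n$; hence $\mult_I(P_1) = n-1 < \ell_x(Q_1)$. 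The lemma still holds here because $\mult_I(P_2) = 2$ picks up the slack, but your per-component bound fails.

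The localization you propose to repair this rests on a reversed inclusion. You write that $I_i$ is an intersection of $Q_j$ with $P_j \subsetneq P_i$, ``none of which is contained in $P_i$''; but $P_j \subsetneq P_i$ means precisely that $P_j$ (and hence $Q_j \subset P_j$) \emph{is} contained in $P_i$, so each $(Q_j)_{P_i}$ remains a proper ideal and $(I_i)_{P_i} \neq S_{P_i}$. There is also a smaller slip: for $f \in P_i$ one has $f^m \in Q_i$ for some $m$, so $(Q_i:f^\infty) = S$, not a proper ideal; thus the top of your chain is $I_i$ rather than something of the form $R \cap I_i$ with $R$ a $P_i$-primary ideal.

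The paper's argument is organized differently. It inducts on the number $k$ of primary components and reduces (via $I = \bigcap_i H_i$ with $H_i = Q_i \cap I_i$) to the case where one prime $P_1$ contains all the others. Writing $\ell_f(I) = s + r$ with $s = \max_{i \ge 2} \ell_f(H_i)$, induction bounds $s$, and the key step bounds $r \le \mult_I(P_1)$: one uses Lemma~\ref{lemma:strict chains} to extract a witness $a \in \bigcap_{j>h} Q_j$ for the last $r$ strict steps of $(I:f^{s+j})$, and then observes that $b := a f^s$ already lies in $I_1 = \bigcap_{i \ge 2} H_i$. This is exactly what makes the chain $(Q_1:f^j) \cap I_1$ strictly ascending for $j = 0, \dots, r$, and it replaces the localization you were reaching for.
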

\begin{proof}
Let $L(I) = \sum_{i=0}^h \mult_I(P_i)$. We need to show that $\ell_f(I) \leq L(I)$.

We proceed by induction on $k$, the number of primary components. If $k=1$ then $I=Q_1$ is primary. If $x \not\in P_1$, then by \cite[Lemma 4.4]{AM} $\ell_f(I) = L(I) = 0$. If $x \in P_1$, then the chain
$$Q_1 \subsetneq (Q_1:f) \subsetneq (Q_1:f^2) \subsetneq \dots \subsetneq (Q_1:f^{\ell_f(I)-1}) \subset P_1$$
is a strictly increasing chain of ideals, they are all $P_1$-primary by \cite[Lemma 4.4]{AM}, hence $\ell_f(I) \leq \mult_I(P_1) = L$.

For general $k$, we write $H_i = I_i \cap Q_i$, where $I_i$ is as in subsection \ref{subsez:arit deg}. As $I = \bigcap_{i=1}^k H_i$, then $\ell_f(I) \leq \max_{i=1}^k \ell_f(H_i)$. Moreover for all $i$, $L(H_i) \leq L(I)$. Hence we only need to prove our theorem for ideals of the form $H_i$: ideals having one primary component which is embedded in all the others.

We can suppose that $I  = \bigcap_{i = 1}^k Q_i$, with $P_i \subset P_1$ for all $i > 1$, with $f \in P_1$. Again we write $H_i = I_i \cap Q_i$. Let $s = \max_{i=2}^k \ell_f(H_i)$. Up to reordering we can suppose that $s = \ell_f(H_2)$, and that $f \in P_2$. By inductive hypothesis, as $H_2$ has less primary components than $I$, we know that $s \leq L(H_2)$. If $s = \ell_f(I)$ we are done. We can suppose that $r = \ell_f(I) - s > 0$. Now we will prove that $r \leq \mult_I(P_1)$, and this is enough because then we will have $\ell_f(I) \leq s+r \leq L(H_2) + \mult_I(P_1) \leq L(I)$.

Consider the ideal $(I:f^s) = (Q_1:f^s) \cap \left(\bigcap_{i=2}^k (H_i : f^s)\right)$. For $i \geq 2$, $(H_i:f^s) = (H_i:f^\infty)$. In particular $J = \bigcap_{i=2}^k (H_i : f^s) = \bigcap_{i=h+1}^k Q_i$ by lemma \ref{lemma:iterated division}. Now for $i \geq 0$ we have $(I:f^{s+i})=(Q_1:f^{s+i})\cap J$. The following chain is strictly increasing:
$$(Q_1:f^s)\cap J \subsetneq (Q_1:f^{s+1})\cap J \subsetneq \dots \subsetneq (Q_1:f^{s+r})\cap J $$
By lemma \ref{lemma:strict chains} there is $a \in J$ such that $af^{s+r} \in Q$ and $af^{s+r-1} \not\in Q$. As we said, for every  $i \geq 2$ we have $J \subset (H_i:f^\infty) = (H_i : f^s)$. In particular, for every $i\geq 2$, $af^s \in H_i$. Hence $af^s \in I_1 = \bigcap_{i=2}^k H_i$. If $b=af^s \in I_1$, we have $bf^r \in Q$ and $bf^{r-1} \not\in Q$. By lemma \ref{lemma:strict chains}, we have the strictly ascending chain:
$$Q_1 \cap I_1 \subsetneq (Q_1:f) \cap I_1 \subsetneq \dots \subsetneq (Q_1:f^r) \cap I_1$$
And this implies that $r \leq \mult_I(P_1)$, as required.   
\end{proof}

\begin{lemma}     \label{lemma:monomi contenuti}
Let $I$ be an ideal with primary decomposition $I = \bigcap_{i = 1}^k Q_i$, and let $B = \sum_{i=0}^k \mult_I(P_i)$. Then there exists a monomial $x^\alpha$ of degree not greater than $B$ such that 
$$(I:x^\alpha) = (I : x_0 \cdots x_n^{\infty})$$
In particular, if $I$ contains a monomial, then $I$ contains a monomial of degree not greater than $B$.

If $I$ is saturated, then $B \leq \adeg(I)$. If $I$ is saturated and with Hilbert polynomial $g(m_0, \dots, m_s; x)$, then $B \leq m_0$.
\end{lemma}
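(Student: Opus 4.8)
The plan is to saturate the product of all the variables one variable at a time, using Lemma~\ref{lemma:mainlemma} to bound how many colon steps each variable needs, and then to combine these bounds so that each associated prime is charged only once.

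First I would reduce the ``in particular'' clauses to the colon-ideal statement. A homogeneous ideal $I$ contains a monomial if and only if it contains a power of $f=x_0\cdots x_n$: if $x^\beta\in I$ with $\beta\neq 0$, then $(x_0\cdots x_n)^m$ is divisible by $x^\beta$ for $m=\max_j\beta_j$ and hence lies in $I$, while the converse is trivial. Equivalently, $I$ contains a monomial if and only if $(I:f^\infty)=(1)$. So once I produce a monomial $x^\alpha$ of degree at most $B$ with $(I:x^\alpha)=(I:f^\infty)$ (the first assertion), the ``in particular'' follows: if $I$ contains a monomial then $(I:x^\alpha)=(1)$, so $x^\alpha\in I$.

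To build $x^\alpha$, set $I^{(0)}=I$ and $I^{(j+1)}=(I^{(j)}:x_j^\infty)$ for $j=0,\dots,n$; using the colon-ideal identities listed before Lemma~\ref{lemma:iterated division} one gets $I^{(n+1)}=(I:f^\infty)$. Let $\ell_j=\ell_{x_j}(I^{(j)})$, so that $(I^{(j)}:x_j^{\ell_j})=I^{(j+1)}$ and hence $(I:x_0^{\ell_0}\cdots x_n^{\ell_n})=(I:f^\infty)$; thus $x^\alpha=x_0^{\ell_0}\cdots x_n^{\ell_n}$ works and $\deg x^\alpha=\sum_j\ell_j$. Now by Lemma~\ref{lemma:iterated division}, $I^{(j)}$ has primary decomposition $\bigcap\{Q_i : x_0,\dots,x_{j-1}\notin P_i\}$, and for each surviving prime $P_i$ one has $\mult_{I^{(j)}}(P_i)=\mult_I(P_i)$, because the auxiliary ideal $I_i$ of subsection~\ref{subsez:arit deg} is unchanged: a prime strictly contained in $P_i$ cannot contain a variable that $P_i$ misses. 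Applying Lemma~\ref{lemma:mainlemma} to $I^{(j)}$ and $x_j$ gives $\ell_j\leq\sum\{\mult_I(P_i) : x_0,\dots,x_{j-1}\notin P_i,\ x_j\in P_i\}$. Summing over $j$, an associated prime $P_i$ containing at least one variable is counted exactly once (at the least $j$ with $x_j\in P_i$) and one containing no variable is never counted, so $\sum_j\ell_j\leq\sum_{i=1}^k\mult_I(P_i)=B$.

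For the last two comparisons: if $I$ is saturated then the irrelevant ideal is not associated to $I$, so every $P_i$ has dimension at least $0$ and therefore $\deg(P_i)\geq 1$; hence $B=\sum_i\mult_I(P_i)\leq\sum_i\mult_I(P_i)\deg(P_i)=\sum_{r\geq 0}\adeg_r(I)=\adeg(I)$, and $\adeg(I)\leq m_0$ is the theorem at the start of subsection~\ref{subsez:arit deg}. I expect the main obstacle to be the bookkeeping in the previous paragraph: showing that colon-ing by a single variable drops exactly the primary components whose radical contains that variable while preserving the multiplicities (defined via the ideals $I_i$) of the surviving ones, and then checking that the telescoping sum of the Lemma~\ref{lemma:mainlemma} bounds over-counts nothing. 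Everything else is formal manipulation of colon ideals together with the already-proved bound $\adeg(I)\leq m_0$.
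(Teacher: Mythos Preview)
Your proof is correct and follows essentially the same approach as the paper: saturate variable by variable, use Lemma~\ref{lemma:iterated division} to track which primary components survive, and apply Lemma~\ref{lemma:mainlemma} at each step so that each associated prime is charged exactly once. You are in fact more explicit than the paper on one point it leaves implicit, namely that the multiplicities $\mult_{I^{(j)}}(P_i)$ agree with $\mult_I(P_i)$ because the auxiliary ideals $I_i$ are unchanged when passing to $I^{(j)}$.
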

\begin{proof}
Up to reordering the primary components, we can suppose that there exist integers $1=h_0 \leq \dots \leq h_{n+1} \leq k$ such that for every $i \in \{0, \dots, n\}$
$$x_i \in P_{h_i}, \dots, P_{h_{i+1}-1} \mbox{ and } x_i \not\in P_{h_{i+1}}, \dots, P_k$$
Now let $\alpha_i = \sum_{i=h_i}^{h_{i+1}-1} \mult_I(P_i)$. By applying repeatedly lemmas \ref{lemma:mainlemma} and \ref{lemma:iterated division} we get  
$$(I:x^\alpha)=(\cdots(I:x_0^{\alpha_0}): \cdots : x_n^{\alpha_n}) = (\cdots(I:x_0^{\infty}): \cdots : x_n^{\infty}) = (I : x_0 \cdots x_n^{\infty})$$
The ideal $I$ contains a monomial if and only if $(I : x_0 \cdots x_n^{\infty}) = (1) = (I:x_0^{\alpha_0} \cdots x_n^{\alpha_n})$ and this happens if and only if $I$ contains $x_0^{\alpha_0} \cdots x_n^{\alpha_n}$. 
\end{proof}

\section{Examples}   \label{sez:examples}

\subsection{Hypersurfaces}
Let $\cappa$ be an algebraically closed field, with a surjective real valued valuation $v:\cappa^* \mapsto \erre$. Fix a projective space $\cappa\pro^n$ and a degree $d$. Consider the Hilbert polynomial $p(x) = g(m_0,\dots,m_{n-1}; x)$ with $m_0 = \dots = m_{n-1} = d$. An ideal $I \subset \cappa[x_0, \dots, x_n]$ has Hilbert polynomial $p$ if and only if $I = (f)$ with $f$ homogeneous with $\deg(f) = d$, hence \hilb{p}{n} is the parameter space of hypersurfaces of $\cappa\pro^n$ of degree $d$. For such ideals $I$, the component $I_d$ of degree $d$ contains only the scalar multiples of $f$, hence it is a one-dimensional linear subspace of $S_d$, and all one-dimensional linear subspaces of $S_d$ are of this form. The Grassmannian of one-dimensional subspaces of $S_d$ is the projective space $\pro(S_d) = \cappa\pro^{N-1}$ with $N:=\binom{n+d-1}{n-1}$. This is the space of projective classes of polynomials of degree $d$. The embedding of the Hilbert scheme described in subsection \ref{subsez:immersion} is just the identification of $\hilb{p}{n}$ with $\cappa\pro^{N-1}$. Its tropicalization is $\tropicalization\left(\hilb{p}{n}\right) =\trop\pro^{N-1}$.

In this case it is possible to understand the correspondence from $\tropicalization\left(\hilb{p}{n}\right)$ to the set of tropical hypersurfaces of degree $d$ quite well. Here we want to underline two facts. One is that this map is not injective. The other is that it is possible to adjust things so that the map becomes injective. It is necessary to add some extra structure to the tropical hypersurfaces, namely to add weights to the maximal faces, as usual. Once this extra structure is considered, there exists a unique subpolyhedron $P \subset \tropicalization\left(\hilb{p}{n}\right)$ such that the restriction of the correspondence to $P$ is bijective.
We think that this property of the existence of a subpolyhedron that is a ``good'' parameter space should probably be true also for the general Hilbert scheme, but in general it is not clear what is the suitable extra structure. In the example of the next subsection we show that in the general case the weights are not enough.

The coordinates in $\cappa\pro^{N-1}$ correspond to the coefficients of the polynomial, hence the tropicalization map $\tau:\cappa\pro^{N-1} \setminus C \mapsto \trop\pro^{N-1}$ sends the projective class of an homogeneous polynomial $f$ in the projective class of the tropical polynomial $\tau(f)$. Note that the tropicalization is defined only on $\cappa\pro^{N-1} \setminus C$, where $C$ is the union of the coordinate hyperplanes. Hence we are dealing only with homogeneous polynomials of degree $d$ containing all the monomials of degree $d$ with a non-zero coefficient. This implies that all these polynomials have the same Newton polytope, a simplex that we will denote by $Q \subset \erre^{n+1}$. Let $A = Q \cap \ze^{n+1}$ be the set of monomials of degree $d$. We use the theory of coherent triangulations (see \cite[Chap. 7, Def. 1.3]{GKZ}) and coherent subdivisions (see \cite[Chap. 7, Def. 2.3]{GKZ}). The secondary fan of $(Q,A)$ (see \cite[Chap. 7, C]{GKZ})) is a subdivision of the space $\trop^N$ with polyhedral cones. Two points in the same projective equivalence class belong to the same cone, hence this subdivision passes to the quotient $\trop\pro^{N-1}$. The maximal cones of the secondary fan correspond to coherent triangulations. The subpolyhedron $P$ is the union of all the maximal cones corresponding to triangulations using all the points of $A$ as vertexes. 

Consider a coherent triangulation not using all the points of $A$ as vertexes, and consider a tropical polynomial $f$ lying in the corresponding cone. The points of $A$ not used in the triangulation correspond to monomials of $f$ that never contribute to the maximum of the polynomial. If you perturb slightly the values of their coefficients, the tropical variety does not change, but the point of $\tropicalization\left(\hilb{p}{n}\right)$ changes. This shows that the correspondence is not injective.  

\subsection{Pairs of points in the tropical projective plane}\label{sez:punti_doppi}
Let $\cappa$ be an algebraically closed field, with a surjective real valued valuation $v:\cappa^* \mapsto \erre$. Consider the projective plane $\cappa\p2$ and the Hilbert polynomial $g(2; x)=2$. A scheme $Z$ has Hilbert polynomial $2$ if and only if $Z$ is a pair of two distinct points or $Z$ has a single point with a tangent space of dimension 1. The ideals of such schemes can be retrieved from their homogeneous component of degree 2, which is a 4-dimensional subspace of the vector space of homogeneous polynomials in 3 variables of degree 2, which has dimension 6. The Grassmannian of 4-dimensional subspaces of $S_2$ is embedded in the projective space $\cappa\p{14}$ and \hilb{2}{2} is isomorphic to the symmetric product of two copies of $\cappa\p2$ blown up along the diagonal. The points outside the exceptional divisor correspond to the pairs of distinct points, while the points on the exceptional divisor correspond to singular schemes. We are interested in the latter.

As we said, the singular schemes we find in this setting have a unique point and a 1-dimensional tangent space at it. These schemes can be reconstructed from the data of the point and a line passing through it. In this example we are interested in considering schemes that have the same support while being different as schemes. We fix a point $p=[a:b:c]\in\cappa\p2$. To consider all the schemes supported at $p$ one has to consider all the lines through $p$, which are given by polynomials of the type $f:=lx_0+mx_1+nx_2$. These lines are parametrized by a $\cappa\p1$ and thus in $\hilb{2}{2}$ the locus of the points parametrizing all the schemes supported at $p$, $\Gamma_p$, contained in the exceptional divisor, is isomorphic to $\cappa\p1$. For simplicity, we will restrict ourselves to the generic case; in particular none of the coordinates of $p$ or the coefficients of $f$ is zero.
We can reduce the number of variables involved: the line has to pass through the point, meaning that $n=-\frac{al+bm}c$, and since $l$,$m$ and $n$ are only defined up to a constant, we can choose $m=1$. A scheme supported at $p$ is then defined by the parameter $l$ and we will denote it by $Z_l$.

Now we want to compute the point of the Hilbert scheme that corresponds to a scheme $Z_l$. This point is determined by the homogeneous component $I_2$ of the ideal $I$ defining $Z_l$. To find a basis for $I_2$ we need four independent polynomials in it and, since $I$ contains $f$, three of these can be $x_0f$, $x_1f$ and $x_2f$. As our fourth generator we choose $(cx_0-ax_2)^2$, which corresponds to one of the lines that we excluded counted twice. Since $f$ cannot be $cx_0-ax_2$ these polynomials are independent and form a basis for $I_2$.
The wedge product of the four polynomials that form the basis is an element of $\bigwedge^4S_2$. It is convenient to fix a basis and work with coordinates.
As a basis of the vector space $S_2$ we choose the monomials in the order $x_0^2$, $x_1^2$, $x_2^2$, $x_1x_2$, $x_0x_1$, $x_0x_1$. As a basis of $\bigwedge^4S_2$ we take the wedge products of four out of the six polynomials above, ordered in the same way, and we order the elements of this base in the lexicographic order.

Once we compute the wedge product and find the coordinates there are only four polynomials with more than one term appearing in these expressions, and they are $al+b$, $al-b$, $al+2b$ and $2al+b$. To express the tropicalization of the coordinates we then need to divide $6$ different cases: the first two where $v(al)$ and $v(b)$ are different and the other four where they are equal and the valuation of one of the four binomials is higher.
For this reason, $\tropicalization(\Gamma_p)$ is the union of $6$ rays coming out of a point $P$. Below we report the coordinates of $P$, in square brackets, and the six integer vectors defining each of the six rays. The letters $A$, $B$ and $C$ stand for the tropicalization of $a$, $b$ and $c$ respectively.
\[
\left[\begin{array}{@{}c@{}}
A+B+C \\ 2B+C \\ B+2C \\
B+2C \\ 3C \\ -A+B+3C \\
3B \\ 2B+C \\ -A+3B+C \\
-A+2B+2C \\ 2A+B \\ 2A+C \\
A+B+C \\ A+2C \\ A+2B
\end{array}\right]
,\ \left(\begin{array}{@{}c@{}}
1 \\ 0 \\ 1 \\
1 \\ 0 \\ 1 \\
1 \\ 1 \\ 1 \\
1 \\ 1 \\ 0 \\
1 \\ 0 \\ 1
\end{array}\right)
,\ \left(\begin{array}{@{}c@{}}
-1 \\ 0 \\ 0 \\ 0 \\ 0 \\ -1 \\ 0 \\ 0 \\ -1 \\ -1 \\ 0 \\ 0 \\ 0 \\ 0 \\ 0
\end{array}\right)
,\ \left(\begin{array}{@{}c@{}}
0 \\ 0 \\ -1 \\ 0 \\ 0 \\ 0 \\ -1 \\ 0 \\ 0 \\ 0 \\ -1 \\ 0 \\ 0 \\ 0 \\ -1
\end{array}\right)
,\ \left(\begin{array}{@{}c@{}}
0 \\ 0 \\ 0 \\ -1 \\ 0 \\ 0 \\ 0 \\ 0 \\ 0 \\ 0 \\ 0 \\ 0 \\ 0 \\ 0 \\ 0
\end{array}\right)
,\ \left(\begin{array}{@{}c@{}}
0 \\ 0 \\ 0 \\ 0 \\ 0 \\ 0 \\ 0 \\ -1 \\ 0 \\ 0 \\ 0 \\ 0 \\ 0 \\ 0 \\ 0
\end{array}\right)
,\ \left(\begin{array}{@{}c@{}}
0 \\ 0 \\ 0 \\ 0 \\ 0 \\ 0 \\ 0 \\ 0 \\ 0 \\ 0 \\ 0 \\ 0 \\ -1 \\ 0 \\ 0
\end{array}\right)
\]

Let us now analyze these six rays and understand which schemes' corresponding points lie on each ray.
The points on the first ray correspond to the schemes $Z_l$ for which $L>B-A$. The tropicalization of the line tangent to $Z_l$ is defined by the tropical polynomial $\tau(f)$ having coefficients $L$, $0$ and $L+A-C$. The same is true for the points on the second ray except that they are those for which $L<B-A$ and the coefficients of $\tau(f)$ are $L$, $0$ and $B-C$. The points on the third ray are those for which $v(al+b)<v(al)=v(b)=B$. For them the coefficients are $B-A$, $0$ and $-v(al+b)$. Note that, together, these three expressions give us all the tropical lines passing through the point $[A:B:C]$, which is $\tropicalization(Z_l)$.

The points on the last three rays have a different behavior: for all of them and for the point $P$ the tropicalization of the line is always the same and it is the one that has its center in $[A:B:C]$.

For any value of $l$ we have that $\tropicalization(Z_l)$ is always the same point $[A:B:C]$. Even if one takes into account the weights (see \cite[sec. 3.4]{MS}), the weight at $[A:B:C]$ is always 2. On the other hand there is a natural way to associate to any of the points of $\tropicalization(\Gamma_p)$ a tropical line through $[A:B:C]$ that is the tropicalization of the tangent line to $Z_l$. This seems to suggest that one should consider some extra structure on $\tropicalization(Z_l)$ that includes the datum of the tropicalization of the tangent line.

\end{document}